\documentclass[twocolumn]{autart}    

\usepackage{graphicx} 
\usepackage{cite}
\usepackage{wrapfig,comment}
\usepackage{amsmath,amssymb,amsfonts,mathtools,textgreek}
\usepackage{algorithmic,enumerate}
\usepackage[dvipsnames]{xcolor}
\usepackage{textcomp}
\usepackage{subcaption}
\usepackage{tikz}
\usetikzlibrary{plotmarks,positioning}
\usepackage{xparse}
\usepackage{hyperref}
\usepackage[normalem]{ulem}

\begin{document}
\newtheorem{lemma}{Lemma}
\newtheorem{theorem}{Theorem}
\newtheorem{definition}{Definition}
\newtheorem{assumption}{Assumption}
\newtheorem{corollary}{Corollary}
\newtheorem{remark}{Remark}
\newtheorem{conjecture}{Conjecture}
\newtheorem{problem}{Problem}

\begin{frontmatter}

\title{Risk-Aware Linear-Quadratic Regulation with\\Temporally Coupled States\thanksref{footnoteinfo}} \vspace{-8mm} 

\thanks[footnoteinfo]{This work was supported by the  Edward S. Rogers Sr. Department of Electrical and
Computer Engineering at the University of Toronto, and the Natural Sciences and Engineering Research Council of Canada (NSERC) Discovery Grants Program [RGPIN-2022-04140]. Cette recherche a \'{e}t\'{e} financée par le Conseil de recherches en sciences naturelles et en g\'{e}nie du Canada (CRSNG). This work was not presented at an IFAC meeting and was not published in any conference proceedings.\\ $^\ast$ Corresponding author: M.P. Chapman (email address: margaret.chapman[at]utoronto.ca).\\\hphantom{$^\ast$} \emph{Email addresses:} 
chuanning.wei[at]mail.utoronto.ca (C. Wei), calvinkf.li[at]mail.utoronto.ca (K.F. Li), dionysis.kalogerias[at]yale.edu (D. Kalogerias).} 
%
%
\author[ChapmanAddress]{Chuanning Wei}, 
\author[ChapmanAddress]{Kin Fung Li}, 
\author[DionysisAddress]{Dionysis Kalogerias}, 
\author[ChapmanAddress]{Margaret P. Chapman$^{\ast\text{,}}$}
\address[ChapmanAddress]{The Edward S. Rogers Sr. Department of Electrical and Computer Engineering, University of Toronto, 10 King's College Road, Toronto, ON, M5S 3G4, Canada}  
\address[DionysisAddress]{Department of Electrical and Computer Engineering, Yale University, New Haven, CT, 06520, USA}             
          \vspace{-5mm}
          
\begin{keyword}                
Stochastic control; Risk-aware optimal control; Temporally coupled states; Linear systems.
\end{keyword} 

\begin{abstract}                          
We formulate and solve a discrete-time linear-quadratic regulation (LQR) problem in a finite horizon that penalizes temporal variability and stochastic variability of the state trajectory. 
Our approach enables the user to 
strike a balance between regulating the state and reducing temporal variability,
with explicit sensitivity to risk. We achieve this by extending a risk measure called predictive variance to a setting with temporally coupled states. Numerical examples demonstrate the effect of temporal coupling in both
risk-aware and risk-neutral control settings. Particularly, we observe that explicitly penalizing temporal variability alone can also reduce stochastic variability. \vspace{-3mm}

\end{abstract}
\end{frontmatter}

\section{Introduction} \vspace{-2mm}\label{sec:intro}

In state regulation of a control system, the behavior of the state trajectory can differ as a result of design choices.
%
In particular, one may prefer trajectories that vary slowly over time, if abrupt changes in the state are undesirable. 
%
%
The design of slowly varying, continuous-time trajectories (small $|\dot{x}|$) has been studied in \cite{guarino1996nonlinear, guarino2014discrete, consolini2024time}, among many others,
whereas this brief paper concerns the design of slowly varying, discrete-time trajectories (small $|x_{t} -x_{t-1}|$). 
The goal to design slowly varying trajectories arises naturally in areas such as robotics and mechatronics \cite{del2017joint, artunedo2021jerk, dixit2024step},
building climate control \cite{oldewurtel2013stochastic, somasundaram2023raising}, and even medicine \cite{papachristoforou2020association}. For example, the path of a robot can be designed to be short by penalizing the distance between subsequent states \cite[Eq. 8]{dixit2024step}.
Additionally, in building climate control, occupants were found to prefer raising the temperatures of their air conditioners gradually rather than abruptly, which incentivizes energy savings \cite{somasundaram2023raising}. Another example is that large glycemic variability, i.e., fluctuations in blood glucose level, is linked to diabetic complications \cite{papachristoforou2020association}.
%
Motivated by such examples and viewing real-world uncertainties from a probabilistic perspective, we study a stochastic controller synthesis problem that rewards slowly varying trajectories. The setting of interest concerns standard linear systems, but we evaluate their performance in a manner that deviates from a classical stochastic viewpoint.

Classical stochastic optimal control summarizes uncertain performance on average and thus 
is not designed 
to 
manage rare events 
\cite{van2015distributionally, majumdar2020should, wang2021riskaverse, smith2023exponential,akella2025risk}. For example, the linear-quadratic-Gaussian problem 
minimizes the expectation of the LQR cost $\mathcal{J} \coloneqq x_N^\top Q x_N + \sum_{t=0}^{N-1} x_t^\top Q x_t + u_t^\top R u_t$ and 
so lacks explicit sensitivity
to the stochastic variability of $\mathcal{J}$. 
This limitation 
motivates the use of more nuanced measures of stochastic system performance in controller design. Fortunately, \emph{risk measures} quantify a probability distribution in various ways, e.g., the mean of the upper tail, the spread relative to some value,
and the mean under distributional ambiguity, offering different representations of risk \cite{shapiro2021lectures, royset2025Risk, wang2021riskaverse,chapman2024risk,tsiamis2025linear,akella2025risk, biswas2023ergodic}.
This brief paper uses a variance-like measure to quantify risk for a fully observable linear system with additive noise and hence highlights literature in similar settings.
%
For information about other types of risk-aware control and related topics, please see \cite{shapiro2021lectures, wang2021riskaverse, biswas2023ergodic,royset2025Risk,akella2025risk}.

Reducing stochastic variability in system performance can be dated back to the 
minimal cost variance (MCV) 
problem studied by Sain in 1966, which minimizes the variance of an LQR cost subject to an expected cost constraint \cite{sain1966control}.
%
%
Another classical example is the linear-exponential-quadratic-Gaussian (LEQG) problem \cite{whittle1981risk}. This problem minimizes the objective 
$-2\theta^{-1} \log\mathbb{E}(\exp(-\theta\mathcal{J}/2))$ with a risk-aversion parameter $\theta < 0$,
which approximates a mean-variance objective locally \cite{whittle1981risk}. LEQG is closely related to other problems in control theory. There is a correspondence between LEQG and a mixed $\mathcal{H}_2$/$\mathcal{H}_\infty$ control problem \cite{glover1988state, zhang2021policy}.
Also, both the MCV problem and the LEQG problem (under a series expansion of the LEQG objective) can be viewed as special cases of controlling the statistics of the distribution of an LQR cost \cite{sain1995cumulants, kang2013statistical}.
%
%
%
%
%
However, the Gaussian noise assumptions in \cite{whittle1981risk, kang2013statistical}
imply that these controllers are not designed to handle more general noise distributions that may be skewed or heavy-tailed.

A modern approach for managing stochastic variability in linear system performance is to incorporate the \emph{predictive variance} measure into an optimal control problem \cite{tsiamis2020risk}. This measure assesses the variability of the state energy $x_t^\top Q x_t$ relative to the predicted average state energy at time $t$ \cite{tsiamis2020risk}. It takes the form of $\mathbb{E}((x_t^\top Q x_t - \mathbb{E}(x_t^\top Q x_t|h_{t-1}))^2)$, where $h_{t-1}$ contains historical information
before time $t$ \cite{tsiamis2020risk}. Predictive variance is an attractive risk measure for fully observable linear systems with additive noise for several reasons. First, the noise need not be Gaussian \cite{tsiamis2020risk}.
Second, 
an optimal controller that is affine state-feedback and depends on the skewness of the noise can be computed in closed-form via a Riccati recursion \cite{tsiamis2020risk}. 
Third, predictive variance assesses risk in an interpretable way in terms of stochastic variability in a dynamic setting \cite{tsiamis2020risk}. This type of risk assessment is a natural choice when variability above and below a benchmark (the predicted average state energy in this case) should be penalized evenly \cite{smith2023exponential, wang2021riskaverse}. Also, note that using variance as a measure of risk has a rich history in financial portfolio management; e.g., see \cite{cui2022survey}.
%
The predictive variance approach has been extended to the infinite-horizon case \cite{zhao2021infinite}, partially observable systems \cite{tsiamis2025linear}, interconnected systems \cite{patel2024risk, wang2025risk}, and model predictive control \cite{chen2025risk}, showing its applicability to various LQ control problems. 
In the current work, we connect predictive variance to the aim of rewarding state trajectories that vary gradually over time. To achieve this, first we extend
both the state energy and the predictive variance to be defined for a portion of the state trajectory, where the involved states are said to be \emph{temporally coupled}.
Then, we propose and solve an optimal control problem where the risk measure is the predictive variance for temporally coupled states.\footnote{In contrast, one of the current authors studies a risk-aware control problem involving a different risk measure without temporal coupling and a class of nonlinear systems with multiplicative and additive noise in \cite{dhairyapaper}.} Though experimentation, we observe that a reduction in temporal variability can even induce a reduction in stochastic variability (i.e., risk aversion) when predictive variance is absent from the optimal control problem.  
%
%

Temporal coupling reflects the behavior of a stochastic control system \emph{over a time period} (of stages) rather than at a single stage.
We refer to behavior over such a period as trajectory-wise behavior.
Practical examples 
include
the path length traversed by a robot \cite{dixit2024step},
the maximum rate of power drawn from an electric grid \cite{jones2020extensions},
and a system's maximum distance outside a safe region \cite{chapman2023on}. In contrast, stage-wise behavior, like the robot's current location or the instantaneous rate of power, only indicate behavior briefly in time.
Trajectory-wise behavior can be formally specified by a \emph{non-Markovian} objective functional, where the cost incurred at some stage is a functional of states at different times \cite{bacchus1996rewarding}.
%
%

Now, consider the problem of optimizing a non-Markovian objective for a control system at hand.
While dynamic programming (DP) is a standard approach for deriving optimal controllers, the system model in its given form may not be conducive to a direct application of DP. 
However, it may be valid to apply DP by first suitably augmenting the state with additional information \cite{jones2020extensions}.
Deterministic and stochastic DP with state augmentation has been studied in \cite{jones2020extensions}, where the authors developed examples involving the optimization of a maximum cost.
%
%
%
State augmentation is also useful for solving some risk-aware optimal control problems; examples include problems whose objectives are defined in terms of the conditional value-at-risk (CVaR) measure \cite{bauerle2011markov, chapman2023on}
and expected utility measures \cite{bauerlerieder}.\footnote{The CVaR represents an expected value in a given fraction of the worst cases \cite{shapiro2021lectures}. An expected utility generalizes the exponential utility \cite{bauerlerieder}.}
 %
Applying DP to an augmented system increases numerical complexity \cite{jones2020extensions},
%
%
and the user must decide whether such complexity is acceptable for a given application. 
%
%
While this brief paper does not specifically address 
the numerical complexity challenge,
we use an augmented state with a flexible dimension $n_t \leq n(\mathbf{k} + 1)$, where $n$ is the original state dimension but
$\mathbf{k}$ can be adjusted. Our solution is based on a Riccati recursion, and the dimension of the inverted matrix does not depend on $\mathbf{k}$. 
%
%
%



\paragraph*{Contribution.}
We formulate and solve a nonstandard stochastic LQR problem in a risk-aware manner, i.e., with explicit sensitivity to rarer outcomes in addition to what is expected.
The problem deviates from a standard regulation problem in that it allows the user to encode a preference for slowly varying state trajectories along with state regulation, for example, by penalizing both $(x_t - x_{t-1})^\top Q (x_t - x_{t-1})$ and $x_t^\top Q x_t$.
%
We generalize this penalty using a convex quadratic cost that contains historical states over a period, a cost called the \emph{sequential state energy}, and those states are said to be \emph{temporally coupled}.
We quantify risk by extending predictive variance \cite{tsiamis2020risk,tsiamis2025linear},
a measure of stochastic variability, to be defined for the sequential state energy. The extended predictive variance is 
a \emph{trajectory-wise} risk concept instead of a stage-wise one. That is, in general, the risk over a period consisting of distinct stages is not immediately separable into the risks over these stages, and such a separation may even be impossible.
%
Thus, our problem differs from risk-aware control problems that concern stage-wise risk concepts, such as those in \cite{ruszczynski2010risk, fujimoto2011optimal, tsiamis2020risk, samuelson2018safety, van2015distributionally, bauerle2022markov, jia2024decentralized,tsiamis2025linear,dhairyapaper}.
%
%
%
%
%
%
Since the (extended) predictive variance is defined via conditional expectation,
we derive an analytical expression for it. This expression also turns out to be convex quadratic in the temporally coupled states (Lemmas \ref{lemma:delta-t-decomposition}--\ref{lemma:pred-var-quad}).
This nontrivial discovery allows us to reformulate our nonstandard problem (Problem~\ref{prob:mean-pv}) into a form that enjoys a well-known solution.
%
An optimal controller can be 
found analytically via a Riccati recursion
and is affine in a recent state history (Theorem~\ref{prop:equiv}, Corollary~\ref{thm:main}).
Numerical examples illustrate tradeoffs that can arise from different parameter choices, and demonstrate the effect of temporal coupling in both risk-aware and risk-neutral control settings. 
\paragraph*{Organization.} 
Section~\ref{sec:model} formulates the optimal control problem (Problem 1).
Section~\ref{sec:reduction} presents an analytical expression for the (extended) predictive variance, reformulating Problem 1 into a problem with a quadratic non-Markovian objective (Problem 2). Section~\ref{sec:opt-control} reformulates Problem 2 into a standard form (Problem 3) via a suitable state augmentation, leading to an optimal controller.
%
Section \ref{sec:num} presents numerical examples, and Section \ref{sec:conclusion} offers future directions inspired by this work.

\paragraph*{Notation.}
$\mathbb{N} \coloneqq \{1,2,3,\dots\}$.
%
%
$\mathbb{R}^{n \times m}$ is the set of $n \times m$ real matrices.
%
$0_{n \times m}$ is the $n \times m$ matrix of all zeros. $I_n$ is the $n \times n$ identity matrix. $\mathcal{S}_n$ ($\mathcal{S}_n^+$) is the set of $n \times n$ real symmetric positive semidefinite (definite) matrices.
$(\cdot)^\top$ denotes the matrix transpose. $\oplus$ denotes the matrix direct sum \cite[Def.~0.9.2]{horn2012matrix}. 
$|\cdot|$ denotes the Euclidean norm.
$\mathbb{E}(\cdot)$ ($\mathrm{var}(\cdot)$) denotes expectation (variance). $y \in \mathcal{L}^2(\mathcal{F})$ specifies that $y$ is an $\mathcal{F}$-measurable random vector 
such that $\mathbb{E}(|y|^2) < \infty$. $\sigma(\cdot)$ denotes the $\sigma$-algebra generated by a random vector \cite[Def. 6.4.1]{ash1972probability}. 
a.s. stands for almost surely. 
We fix a parameter $\mathbf{k}$ throughout Sections~\ref{sec:model}--\ref{sec:opt-control} and often suppress it from our notation for brevity. 
$\lambda \in [0,\infty)$ and $R \in \mathcal{S}_m^+$ are parameters representing the importance of penalizing risk and control effort, respectively.
The symbol $\mathbf{u}$ is shorthand for $(u_0,\dots,u_{N-1})$.

\section{Problem Formulation} \label{sec:model}

Consider a fully observable, stochastic linear system in discrete time
\begin{equation} \label{linsys}
    x_{t+1} = Ax_t + Bu_t + w_t, \quad t \in \{0,\dots,N-1\},
\end{equation}
where $N \in \mathbb{N}$ is the horizon length, $A \in \mathbb{R}^{n \times n}$ and $B \in \mathbb{R}^{n \times m}$ are deterministic matrices, $x_t$ is a $\mathbb{R}^n$-valued random vector (state), $u_t$ is a $\mathbb{R}^m$-valued random vector (control), and $w_t$ is a $\mathbb{R}^n$-valued random vector (disturbance). 
%
%
%
All random vectors are defined on a probability space $(\Omega,\mathcal{G},\mathbb{P})$, and 
$x_0,w_0,\dots,w_{N-1}$ are independent \cite[p. 18]{kumar2015stochastic}. 
We assume that $w_0,\dots,w_{N-1}$ are identically distributed without imposing a particular distribution, although we impose an integrability condition below.
%
We define $\mathcal{F}_t \coloneqq \sigma(h_t)$, where $h_t$ is a random vector specified by $h_t \coloneqq [x_0^\top,u_0^\top,\dots,x_{t-1}^\top, u_{t-1}^\top,x_t^\top]^\top$ if $t \in \{1,\dots,N\}$ and $h_0 \coloneqq x_0$. 
%
%
We assume that the initial state is deterministic, i.e., $x_0 = \mathbf{x}_0$ a.s., where $\mathbf{x}_0 \in \mathbb{R}^n$ is arbitrary. 
We also impose the following conditions:
\begin{assumption}\label{assu1}
    For every $t \in \{0, \ldots, N-1\}$, \emph{(A1)} $u_t$ is causal and square-integrable, i.e., $u_t \in \mathcal{L}^2(\mathcal{F}_t)$; and \emph{(A2)} $w_t$ satisfies $\mathbb{E}(|w_t|^4) < \infty$.
\end{assumption}
While the square-integrability of $x_0$ and all controls and disturbances ensures that each $x_t$ is square-integrable due to 
\eqref{linsys}, 
the stronger assumption (A2) enables the use of a risk measure called predictive variance \cite{tsiamis2020risk}.\footnote{Strictly speaking, a risk measure is a map from a space of random variables to the extended real line \cite{shapiro2021lectures}. 
Although predictive variance is defined in terms of multiple random objects, we still call it a risk measure for simplicity.} 
Under the same assumption, next we extend predictive variance to the setting of temporally coupled states. 

\subsection{Sequential State Energy and its Predictive Variance}\label{sec2a}
As motivated in Section~\ref{sec:intro}, we study an optimal control problem (to be defined in Section~\ref{seclabelproblem}) that penalizes temporal variability and stochastic variability of the state trajectory. 
To quantify temporal variability,
we extend the state energy $x_t^\top Q x_t$ as follows: 
Given a \emph{coupling length} $\mathbf{k} \in \{0,\ldots,N\}$, we define for each $t \in \{0,\ldots,N\}$ the \emph{sequential state energy}
\begin{equation} \label{eq:zt}
    z_t \coloneqq \eta_t^\top \mathcal{Q}_t \eta_t,
\end{equation}
where $\eta_t \coloneqq [ x_t^\top \; \cdots \; x_{t-\mathbf{k}_t}^\top ]^\top$ is the \emph{truncated state history} containing $n_t \coloneqq n(\mathbf{k}_t + 1)$ elements with 
$\mathbf{k}_t \coloneqq \min\{\mathbf{k},t\}$, and $\mathcal{Q}_t$ is the first $n_t$ rows and first $n_t$ columns of the matrix $\mathcal{Q} \in \mathcal{S}_{n(\mathbf{k}+1)}$ denoted by 
\begin{equation}\label{eq:scriptQ}
    \mathcal{Q} \coloneqq \begin{bmatrix}
        Q_{00} & \cdots & Q_{0 \mathbf{k}} \\
        \vdots & \ddots & \vdots \\
        Q_{\mathbf{k} 0} & \cdots & Q_{\mathbf{k} \mathbf{k}} \\
    \end{bmatrix}. 
\end{equation}
Since $\mathcal{Q} \in \mathcal{S}_{n(\mathbf{k}+1)}$ by assumption, it follows that $\mathcal{Q}_t \in \mathcal{S}_{n_t}$ and 
$Q_{ii} \in \mathcal{S}_n$ for each $i$ \cite[Observation 7.1.2]{horn2012matrix}. 
$\eta_t$ encodes the state trajectory in at most $\mathbf{k}+1$ recent times; the states in $\eta_t$ are said to be temporally coupled (introduced previously).
For example, given $Q, \bar{Q} \in \mathcal{S}_n$, we can express the one-step temporally coupled cost
\begin{equation} \label{eq:first-order-ex}
    x_t^\top Q x_t + (x_t - x_{t-1})^\top \bar{Q} (x_t - x_{t-1})
\end{equation}
in the form of \eqref{eq:zt} using $\eta_t = [x_t^\top \; x_{t-1}^\top]^\top$ and $\mathcal{Q}_t = Q \oplus 0_{n \times n} + [-I_n \; I_n]^\top \bar{Q} [-I_n \; I_n]$.

Next, we quantify risk in terms of the stochastic variability of the recent state trajectory. 
%
We define the \emph{predictive variance of the sequential state energy} by $\mathbb{E}(\Delta_t^2)$, where $\Delta_t \coloneqq z_t - \mathbb{E}(z_t | \mathcal{F}_{t-1})$, for every $t \in \{1,\dots,N\}$. If $\mathbf{k} = 0$ and $Q_{00}=Q$, then the sequential state energy $z_t$~\eqref{eq:zt} reduces to the state energy $x_t^\top Q x_t$, so that $\Delta_t = x_t^\top Q x_t - \mathbb{E}(x_t^\top Q x_t | \mathcal{F}_{t-1})$ and our definition of predictive variance coincides with the one defined for state energy in \cite[Eq.~3]{tsiamis2020risk}.
Our construction enables the user to strike a balance between regulating the state and reducing temporal variability 
by permitting a cost function like \eqref{eq:first-order-ex}. 

\subsection{Risk-Aware Optimal Control Problem} \label{seclabelproblem}


Using the predictive variance $\mathbb{E}(\Delta_t^2)$ defined in Section \ref{sec2a}, we present the following optimal control problem: 


\begin{problem} \label{prob:mean-pv}
    Under Assumption (A2), 
    consider 
    \begin{align*}
         & \; \inf_\mathbf{u} && \mathcal{J}_\lambda(\mathbf{u}) \\
        & \mathrm{\;\;subject\;to} && \mathrm{dynamics}~\eqref{linsys}, \; x_0 = \mathbf{x}_0 \mathrm{ \;a.s.}, \\
        & && u_t \in \mathcal{L}^2(\mathcal{F}_t), \; t \in \{0,\ldots,N-1\} ,
    \end{align*}
    where the objective $\mathcal{J}_\lambda(\mathbf{u})$ is defined by
    \begin{equation} \label{eq:obj}
        \textstyle     \mathcal{J}_\lambda(\mathbf{u}) \coloneqq \mathbb{E}\left(z_N + \sum_{t=0}^{N-1}z_t + u_t^\top R u_t\right) + \lambda \mathbb{E}\left(\sum_{t=1}^{N} \Delta_{t}^2\right) .
    \end{equation}
\end{problem}

Problem~\ref{prob:mean-pv} allows the user to trade off various preferences related to $\lambda$, $\mathcal{Q}_t$, and $\mathbf{k}$. 
$\lambda$ represents the relative importance of penalizing $z_t$~\eqref{eq:zt} and control effort on average versus the stochastic variability of $z_t$. Hence, Problem~\ref{prob:mean-pv} resembles a \emph{mean-risk} optimization problem, which gives the user flexibility to make a compromise between reducing a cost on average versus reducing its uncertainty~\cite[Sec.~6.2]{shapiro2021lectures}.
%
$\mathcal{Q}_t$ and $\mathbf{k}$
offer finer control over the state trajectory, in addition to the typical goal of state regulation. 
$z_t$~\eqref{eq:zt} expands to
\begin{equation} \label{eq:zt-expanded}
    z_t = \sum_{i=1}^{\mathbf{k}_t} \sum_{j=1}^{\mathbf{k}_t} x_{t-i}^\top Q_{ij} x_{t-j} + 2 \sum_{i=1}^{\mathbf{k}_t} x_{t-i}^\top Q_{i0} x_t + x_t^\top Q_{00} x_t,
\end{equation}
where each term is bilinear in two states at possibly different times.
%
%
%
$x_t$ is coupled with a previous state $x_{t-i}$ via $x_{t-i}^\top Q_{i0} x_t$, but only $x_t^\top Q_{00} x_t$ would take effect at time $t$ in standard state regulation. 
In the special case of \eqref{eq:first-order-ex}, as $\bar{Q}$ dominates $Q$, reducing the one-step state difference is preferred over driving the state to the origin. 
Previewing the experiments in Section 5, we find that small stochastic variability can arise not only in a setting with $\lambda > 0$, but also in a setting with large $\mathbf{k}$ and $\lambda = 0$. The latter illustrates a link between temporal coupling and risk reduction, although risk is not explicit in $\mathcal{J}_\lambda$ in this case.
%

Returning to the general form of Problem~\ref{prob:mean-pv}, we further describe its relation to the risk-aware control literature.
Problem~\ref{prob:mean-pv} explicitly quantifies 
risk 
in terms of stochastic variability while accounting for temporal variability, differing from many approaches.
%
For example, approaches based on CVaR are designed to be sensitive 
to the (one-sided) tail but not the rest of a probability distribution
\cite{van2015distributionally, samuelson2018safety, chapman2022tac, kishida2022risk, chapman2023on, akella2025risk}.
In contrast, variance-like measures, particularly our (extended) predictive variance,
consider both sides of a distribution and 
quantify
stochastic (two-sided) variability.
This also differs from a nested risk representation \cite{ruszczynski2010risk, bauerle2022markov}, which takes the form of $\rho_0(y_0 + \rho_1(y_1 + \dots ) )$ with $y_t$ a random variable and $\rho_t$ a map assessing risk at time $t$, where the nested structure can be difficult to interpret \cite{iancu2015tight, wang2021riskaverse}.
%
%
It is pertinent to assess the risk of a control system in an interpretable way, especially when the system represents a real-world process \cite{chapman2022tac, majumdar2020should}.
As well as being interpretable, Problem 1 involves a trajectory-wise risk assessment,
thereby differing from \cite{ruszczynski2010risk, fujimoto2011optimal, tsiamis2020risk, samuelson2018safety, van2015distributionally, bauerle2022markov, jia2024decentralized,tsiamis2025linear,dhairyapaper}, mentioned in Section \ref{sec:intro}. 
%
More specifically, Problem 1 penalizes the mean and risk of $z_t$ \eqref{eq:zt}, whose form enables the generation of slowly varying trajectories, to be illustrated in Section~\ref{sec:num}. 
Additionally, it will become clear that Problem~\ref{prob:mean-pv} enjoys an analytical solution due to the linear dynamics and choice of objective functional (Section \ref{sec:opt-control}).
Interpretable, trajectory-wise risk assessments were developed using CVaR \cite{bauerle2011markov, chapman2022tac, chapman2023on} and expected utility measures \cite{bauerlerieder} for Markov decision processes
but do not admit analytical solutions in general.
Having motivated Problem 1, we turn our attention to solving it through reformulation.


\section{Reformulation to a Quadratic Form} \label{sec:reduction}

To reformulate Problem~\ref{prob:mean-pv}, first we show that its objective enjoys a quadratic representation in appropriate variables in three steps. First, we decompose $\Delta_t^2$ into a sum of three integrable terms and evaluate their expectations (Lemma~\ref{lemma:delta-t-decomposition}). Then, we show that the predictive variance $\mathbb{E}(\Delta_t^2)$ is convex and quadratic in $\eta_t$ (Lemma~\ref{lemma:pred-var-quad}). Finally, we reformulate the objective $\mathcal{J}_\lambda(\mathbf{u})$ as a sum of convex quadratic functions in $\eta_t$ and $u_t$ (Lemma~\ref{lemma:obj-quad}). These results are expressed in terms of the following stationary statistics, 
which are finite under Assumption~\ref{assu1}: $\bar{w}  \coloneqq \mathbb{E}(w_{t-1})$, $\Sigma \coloneqq \mathbb{E}(d_t d_t^\top)$, $\gamma  \coloneqq \mathbb{E}(d_t d_t^\top Q_{00} d_t)$, and $\delta \coloneqq \mathrm{var}(d_t^\top Q_{00} d_t)$, where $d_t \coloneqq w_{t-1} - \bar{w}$ for every $t\in\{1,\ldots,N\}$ \cite{tsiamis2020risk}. 
The results are grounded in elementary measure and probability theory \cite{ash1972probability}.
%

\begin{lemma} \label{lemma:delta-t-decomposition}
    Let Assumption~\ref{assu1} hold and $t \in \{1,\dots,N\}$ be given. Define the prediction error of the state energy 
    by $y_t \coloneqq x_t^\top Q_{00} x_t - \mathbb{E}(x_t^\top Q_{00} x_t | \mathcal{F}_{t-1})$. Also, define
    \begin{subequations} \label{term123} \begin{align}
        \Delta_{t,1} & \coloneqq y_t^2, \label{term1} \\
        \Delta_{t,2} & \coloneqq \textstyle 4 \left(\sum_{i=1}^{\mathbf{k}_t} x_{t-i}^\top Q_{i0} d_t\right)^2  && \text{if $\mathbf{k}_t \geq 1$, and} \label{term2} \\
        \Delta_{t,3} & \coloneqq \textstyle 4 \sum_{i=1}^{\mathbf{k}_t} x_{t-i}^\top Q_{i0} d_t y_t && \text{if $\mathbf{k}_t \geq 1$}. \label{term3}
    \end{align}\end{subequations}
     If $\mathbf{k}_t \ge 1$, then
    \begin{subequations}
    \begin{equation} \label{term123-decomp}
        \Delta_t^2 = \Delta_{t,1} + \Delta_{t,2} + \Delta_{t,3} \quad \text{a.s.},
    \end{equation}
    where each term on the right-hand side is integrable,
    \begin{align}
        \mathbb{E}(\Delta_{t,1}) & = 4\mathbb{E}(x_t^\top Q_{00} \Sigma Q_{00} x_t + x_t^\top Q_{00} \gamma) + \vartheta, && \label{term1-expt} \\
        \mathbb{E}(\Delta_{t,2}) & \textstyle = 4 \sum_{i=1}^{\mathbf{k}_t} \sum_{j=1}^{\mathbf{k}_t} \mathbb{E}(x_{t-i}^\top Q_{i0} \Sigma Q_{0j} x_{t-j}), && \label{term2-expt} \\
        \mathbb{E}(\Delta_{t,3}) & \textstyle = 4\sum^{\mathbf{k}_t}_{i=1} \mathbb{E}\big(x_{t-i}^\top Q_{i0} (\gamma + 2\Sigma Q_{00} x_t)\big) , && \label{term3-expt}
    \end{align}
    \end{subequations}
    and $\vartheta \coloneqq \delta - 4 \mathrm{tr}((\Sigma Q_{00})^2)$.
    If $\mathbf{k}_t = 0$, then $\Delta_t^2 = \Delta_{t,1}$ is integrable and $\mathbb{E}(\Delta_t^2)$ is given by \eqref{term1-expt}.
\end{lemma}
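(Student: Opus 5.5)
The plan is to split the state at time $t$ into a predictable part and a fresh noise part. Write $x_t = \mu_t + d_t$, where $\mu_t \coloneqq A x_{t-1} + B u_{t-1} + \bar{w}$. Under (A1) and the dynamics \eqref{linsys}, $\mu_t \in \mathcal{L}^2(\mathcal{F}_{t-1})$, and so is every $x_{t-i}$ with $1 \le i \le \mathbf{k}_t$. Since $w_{t-1}$ is independent of $h_{t-1}$, $d_t$ is independent of $\mathcal{F}_{t-1}$. Moreover $\mathbb{E}(d_t)=0$ and $\mathbb{E}(|d_t|^4)<\infty$ by (A2).

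\textbf{Step 1: formula for $\Delta_t$.} Use the expansion \eqref{eq:zt-expanded} and take $\mathbb{E}(\cdot\mid\mathcal{F}_{t-1})$ term by term. The double sum over past states is $\mathcal{F}_{t-1}$-measurable and cancels. Each cross term $2x_{t-i}^\top Q_{i0}x_t$ has conditional expectation $2x_{t-i}^\top Q_{i0}\mu_t$, so it leaves the residual $2x_{t-i}^\top Q_{i0} d_t$. The term $x_t^\top Q_{00}x_t$ leaves $y_t$. Hence
\[
\Delta_t = y_t + 2\textstyle\sum_{i=1}^{\mathbf{k}_t} x_{t-i}^\top Q_{i0} d_t \quad \text{a.s.}
\]
Squaring this gives \eqref{term123-decomp} directly. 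When $\mathbf{k}_t=0$ the sum is absent and $\Delta_t^2 = \Delta_{t,1}$.

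\textbf{Step 2: conditional moments.} Compute $y_t$ explicitly:
\[
y_t = 2\mu_t^\top Q_{00} d_t + d_t^\top Q_{00} d_t - \mathrm{tr}(Q_{00}\Sigma).
\]
Now condition on $\mathcal{F}_{t-1}$. Because $\mu_t$ and the $x_{t-i}$ are $\mathcal{F}_{t-1}$-measurable and $d_t$ is independent of $\mathcal{F}_{t-1}$, each conditional expectation is a deterministic function of the noise moments evaluated at the measurable vectors. Write $a \coloneqq \sum_{i} Q_{0i}x_{t-i}$. Then:
\begin{itemize}
\item $\mathbb{E}(y_t^2\mid\mathcal{F}_{t-1}) = 4\mu_t^\top Q_{00}\Sigma Q_{00}\mu_t + 4\mu_t^\top Q_{00}\gamma + \delta$;
\item $\mathbb{E}((a^\top d_t)^2\mid\mathcal{F}_{t-1}) = a^\top \Sigma a$;
\item $\mathbb{E}(a^\top d_t\, y_t\mid\mathcal{F}_{t-1}) = a^\top(2\Sigma Q_{00}\mu_t + \gamma)$.
\end{itemize}
Taking full expectations and multiplying by the constants $4$ in \eqref{term2}--\eqref{term3} gives \eqref{term2-expt} directly.

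\textbf{Step 3: convert back from $\mu_t$ to $x_t$.} Use $\mu_t = x_t - d_t$. In each product $x_t$ enters against an $\mathcal{F}_{t-1}$-measurable factor, so the $d_t$ contributions vanish by independence and $\mathbb{E}(d_t)=0$. The one exception is the quadratic piece:
\[
\mathbb{E}(x_t^\top Q_{00}\Sigma Q_{00}x_t) = \mathbb{E}(\mu_t^\top Q_{00}\Sigma Q_{00}\mu_t) + \mathrm{tr}((\Sigma Q_{00})^2).
\]
Substituting this produces the constant $\vartheta = \delta - 4\,\mathrm{tr}((\Sigma Q_{00})^2)$ in \eqref{term1-expt}. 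For the same reason, $\mathbb{E}(a^\top\Sigma Q_{00}\mu_t) = \mathbb{E}(a^\top\Sigma Q_{00}x_t)$, which yields \eqref{term3-expt}.

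\textbf{Main obstacle: integrability.} The controls are only square-integrable, so $x_t$ need not have a fourth moment, and I cannot simply bound $\mathbb{E}(\Delta_t^2)$ by crude moment bounds. Every term in $\Delta_{t,1}$, $\Delta_{t,2}$, $\Delta_{t,3}$ has one of two forms:
\begin{itemize}
\item a product of at most two square-integrable $\mathcal{F}_{t-1}$-measurable factors times a polynomial in $d_t$, such as $(\mu_t^\top Q_{00} d_t)^2$ or $x_{t-i}^\top Q_{i0} d_t\, d_t^\top Q_{00} d_t$;
\item a pure polynomial in $d_t$ of degree at most four.
\end{itemize}
For the first form, bound the absolute value by, for example, $|\mu_t|\,|x_{t-i}|\,|d_t|^2$ or $|x_{t-i}|\,|d_t|^3$. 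Independence factors the expectation into $\mathbb{E}(|\mu_t||x_{t-i}|)\,\mathbb{E}(|d_t|^2)$ or $\mathbb{E}(|x_{t-i}|)\,\mathbb{E}(|d_t|^3)$, and both factors are finite by (A1)--(A2). The same factorization for independent integrable products justifies pulling the $\mathcal{F}_{t-1}$-measurable factors out in the conditional expectations of Step 2. I would state this once as a short auxiliary fact and then apply it term by term.
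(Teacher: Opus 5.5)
Your proposal is correct and takes essentially the same route as the paper: the same a.s.\ identity $\Delta_t = y_t + 2\sum_{i=1}^{\mathbf{k}_t} x_{t-i}^\top Q_{i0} d_t$, conditional moments computed by pulling out the $\mathcal{F}_{t-1}$-measurable factors (including $\mu_t = \hat{x}_t$) and using that $d_t$ is independent of $\mathcal{F}_{t-1}$, and a final tower-property step that converts $\mu_t$ back to $x_t$, which produces $\vartheta = \delta - 4\,\mathrm{tr}((\Sigma Q_{00})^2)$. The differences are minor: you derive $\mathbb{E}(\Delta_{t,1})$ yourself rather than citing Tsiamis et al., and you justify integrability directly by bounding each term by square-integrable $\mathcal{F}_{t-1}$-measurable factors times powers of $|d_t|$ and factoring by independence, rather than invoking the $v^\top M w$ conditional-expectation fact that the paper imports from prior work.
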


The proof of Lemma~\ref{lemma:delta-t-decomposition} is based on measure-theoretic principles and is presented in Appendix~\ref{app:lemma1}. The special case of $\mathbf{k}_t=0$ was derived in \cite[Appendix]{tsiamis2020risk} and is stated for completeness, while the terms $\Delta_{t,2}$ \eqref{term2} and $\Delta_{t,3}$ \eqref{term3} arise due to the temporally coupled states. Since $x_{t-i}$ is $\mathcal{F}_{t-1}$-measurable for $i \in \{1,\dots,\mathbf{k}_t\}$, the submatrices $Q_{ij}$ for $i$ and $j$ in $\{1,\dots,\mathbf{k}_t\}$ are absent from \eqref{term123}. However, these submatrices still appear in $z_t$ in general. 
The above results can be expressed compactly. First, for each $t\in\{0,\ldots,N\}$, we define $\zeta_t \coloneqq 4[
        Q_{00}^\top \; \cdots \; Q_{\mathbf{k}_t 0}^\top
    ]^\top \gamma$ and
\begin{equation*}
    H_t  \coloneqq 4 \begin{bmatrix}
        Q_{00} \Sigma Q_{00} & \cdots & Q_{00} \Sigma Q_{0 \mathbf{k}_t} \\
        \vdots & \ddots & \vdots \\
        Q_{\mathbf{k}_t 0} \Sigma Q_{00} & \cdots & Q_{\mathbf{k}_t 0} \Sigma Q_{0 \mathbf{k}_t}
    \end{bmatrix}. 
\end{equation*}

\begin{lemma} \label{lemma:pred-var-quad}
Under Assumption~\ref{assu1}, for any $t \in \{1,\ldots,N\}$, we have $H_t \in \mathcal{S}_{n_t}$ and 
$ \mathbb{E}(\Delta_t^2) = \mathbb{E}(\eta_t^\top H_t \eta_t + \zeta_t^\top \eta_t) + \vartheta$.
\end{lemma}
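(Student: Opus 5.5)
The plan is to assemble the three expectations of Lemma~\ref{lemma:delta-t-decomposition} into a single quadratic form in $\eta_t$. Symmetry of $H_t$ is checked first. Write $H_t = 4 M_t \Sigma M_t^\top$ with $M_t \coloneqq [Q_{00}^\top \; \cdots \; Q_{\mathbf{k}_t 0}^\top]^\top \in \mathbb{R}^{n_t \times n}$. Because $\mathcal{Q}$ is symmetric, $Q_{0j} = Q_{j0}^\top$, so the $(i,j)$ block of $M_t\Sigma M_t^\top$ is $Q_{i0}\Sigma Q_{0j}$, which matches the definition. Since $\Sigma = \mathbb{E}(d_td_t^\top)$ is symmetric positive semidefinite, $v^\top H_t v = 4(M_t^\top v)^\top \Sigma (M_t^\top v) \ge 0$. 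Hence $H_t \in \mathcal{S}_{n_t}$. The same representation gives $\zeta_t = 4 M_t \gamma$.

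Next, the case $\mathbf{k}_t = 0$. Here $\eta_t = x_t$, $H_t = 4Q_{00}\Sigma Q_{00}$ and $\zeta_t = 4Q_{00}\gamma$. Equation \eqref{term1-expt} then gives $\mathbb{E}(\Delta_t^2) = \mathbb{E}(x_t^\top H_t x_t + \zeta_t^\top x_t) + \vartheta$ directly, using $x_t^\top Q_{00}\gamma = (Q_{00}\gamma)^\top x_t$ since $Q_{00}$ is symmetric.

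For $\mathbf{k}_t \ge 1$, I will take expectations in \eqref{term123-decomp} and use linearity, which is allowed because each term is integrable. This yields the sum of \eqref{term1-expt}--\eqref{term3-expt}. Splitting $\eta_t = [x_t^\top \; x_{t-1}^\top \; \cdots]^\top$ into its blocks, I expand
\[
\eta_t^\top H_t \eta_t = 4\textstyle\sum_{i=0}^{\mathbf{k}_t}\sum_{j=0}^{\mathbf{k}_t} x_{t-i}^\top Q_{i0}\Sigma Q_{0j} x_{t-j}.
\]
This splits into four groups:
\begin{itemize}
\item the $(0,0)$ term, which matches the quadratic part of \eqref{term1-expt};
\item the $i,j\ge 1$ terms, which match \eqref{term2-expt};
\item the cross terms with $i\ge1, j=0$;
\item the cross terms with $i=0, j\ge 1$.
\end{itemize}
The last two groups are transposes of each other, since $(x_t^\top Q_{00}\Sigma Q_{0i}x_{t-i})^\top = x_{t-i}^\top Q_{i0}\Sigma Q_{00}x_t$. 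Together they equal $8\sum_{i\ge1} x_{t-i}^\top Q_{i0}\Sigma Q_{00}x_t$, which is the $\Sigma$-part of \eqref{term3-expt}. Likewise, $\zeta_t^\top\eta_t = 4\sum_{i=0}^{\mathbf{k}_t} x_{t-i}^\top Q_{i0}\gamma$. Its $i=0$ term matches the linear part of \eqref{term1-expt}, and its $i\ge1$ terms match the $\gamma$-part of \eqref{term3-expt}. The constant $\vartheta$ carries over unchanged.

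The main obstacle is bookkeeping rather than anything conceptual. The key point is to use $Q_{0j}=Q_{j0}^\top$ and the symmetry of $Q_{00}$ and $\Sigma$ so that the two cross-term groups collapse correctly into the factor $8$. Finiteness of $\mathbb{E}(\eta_t^\top H_t\eta_t)$ is not an issue, because the $x_s$ are square-integrable.
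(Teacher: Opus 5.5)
Your proposal is correct and follows the paper's proof. Your factorization $H_t = 4M_t\Sigma M_t^\top$ is exactly the paper's $4\bar{Q}_t^\top\Sigma\bar{Q}_t$, since $M_t = \bar{Q}_t^\top$ by symmetry of $\mathcal{Q}$; your block-by-block matching of $\eta_t^\top H_t\eta_t + \zeta_t^\top\eta_t$ against the three expectations of Lemma~\ref{lemma:delta-t-decomposition} simply makes explicit the additivity bookkeeping that the paper leaves implicit.
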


\emph{Proof.}
To confirm that $H_t \in \mathcal{S}_{n_t}$, apply the decomposition $H_t = 4 \bar{Q}_t^\top \Sigma \bar{Q}_t$, where $\bar{Q}_t \coloneqq [
    Q_{00} \; \cdots \; Q_{0\mathbf{k}_t}
]$ and $\Sigma \in \mathcal{S}_n$.
The quadratic form of $\mathbb{E}(\Delta_t^2)$ follows from Lemma~\ref{lemma:delta-t-decomposition} and the additivity of expectations of integrable random variables \cite[Th. 1.6.3]{ash1972probability}.  
\qed

\begin{lemma} \label{lemma:obj-quad}
    Under Assumption~\ref{assu1}, $\mathcal{J}_\lambda(\mathbf{u})$~\eqref{eq:obj} satisfies
    \begin{equation} \label{eq:obj-alt}
      \textstyle  \mathcal{J}_\lambda(\mathbf{u}) = \mathbb{E}\left(c_N(\eta_N) + \sum_{t=0}^{N-1}c_t(\eta_t,u_t)\right) + c \;,
    \end{equation}
    where $c_N : \mathbb{R}^{n_N} \rightarrow \mathbb{R}$ and $c_t : \mathbb{R}^{n_t} \times \mathbb{R}^m \rightarrow \mathbb{R}$ are quadratic, $\lambda$-parameterized functions defined by
    \begin{equation} \label{eq:stage-cost} \begin{aligned} 
        c_N(\eta) & \coloneqq \eta^\top \mathcal{Q}_{\lambda,N} \eta + \lambda \zeta_N^\top \eta  \quad   \text{and} \\
        c_t(\eta,u) & \coloneqq \eta^\top \mathcal{Q}_{\lambda,t} \eta + \lambda \zeta_t^\top \eta + u^\top R u,
    \end{aligned}\end{equation}
    respectively, $\mathcal{Q}_{\lambda,t} \coloneqq \mathcal{Q}_t + \lambda H_t \in \mathcal{S}_{n_t}$ is the inflated state history penalty matrix for each $t \in \{0,\dots,N\}$, and $c \coloneqq \lambda(N\vartheta - 4\mathbf{x}_0^\top Q_{00} \Sigma Q_{00} \mathbf{x}_0 - 4\gamma^\top Q_{00} \mathbf{x}_0)$ is a real number.
\end{lemma}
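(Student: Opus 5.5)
The plan is to substitute Lemma~\ref{lemma:pred-var-quad} term by term into the risk part of \eqref{eq:obj}. First, by linearity of expectation (all terms are integrable, since each $x_t\in\mathcal{L}^2$ by the dynamics and (A1), and each $\Delta_t^2$ is integrable by Lemma~\ref{lemma:delta-t-decomposition}), we can write
\[
\mathcal{J}_\lambda(\mathbf{u}) = \mathbb{E}\Big(z_N + \textstyle\sum_{t=0}^{N-1} (z_t + u_t^\top R u_t)\Big) + \lambda \sum_{t=1}^{N}\mathbb{E}(\Delta_t^2).
\]
Applying Lemma~\ref{lemma:pred-var-quad} for each $t\in\{1,\dots,N\}$ then gives
\[
\lambda\sum_{t=1}^{N}\mathbb{E}(\Delta_t^2)=\lambda\sum_{t=1}^{N}\mathbb{E}(\eta_t^\top H_t\eta_t+\zeta_t^\top\eta_t)+\lambda N\vartheta .
\]

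The key bookkeeping step is the index mismatch. The risk sum runs over $t=1,\dots,N$, whereas $c_t$ in \eqref{eq:stage-cost} includes the $\lambda$-terms for every $t\in\{0,\dots,N\}$. I will therefore add and subtract the $t=0$ term $\lambda(\eta_0^\top H_0\eta_0+\zeta_0^\top\eta_0)$. Since $\mathbf{k}_0=0$, we have $\eta_0=x_0=\mathbf{x}_0$ a.s., $H_0=4Q_{00}\Sigma Q_{00}$ and $\zeta_0=4Q_{00}\gamma$. This makes the subtracted quantity deterministic and equal to
\[
\lambda\bigl(4\mathbf{x}_0^\top Q_{00}\Sigma Q_{00}\mathbf{x}_0+4\gamma^\top Q_{00}\mathbf{x}_0\bigr),
\]
where we use $Q_{00}$ symmetric. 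Combined with $\lambda N\vartheta$, this yields exactly the constant $c$.

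Next, I use $z_t=\eta_t^\top\mathcal{Q}_t\eta_t$ and group terms by time. This gives $\eta_t^\top(\mathcal{Q}_t+\lambda H_t)\eta_t+\lambda\zeta_t^\top\eta_t$ plus $u_t^\top Ru_t$ for $t<N$, and the same quadratic without the control term for $t=N$. These are $c_t(\eta_t,u_t)$ and $c_N(\eta_N)$, so pulling the finite sum back inside a single expectation gives \eqref{eq:obj-alt}.

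Finally, $\mathcal{Q}_{\lambda,t}\in\mathcal{S}_{n_t}$ follows because $\mathcal{Q}_t\in\mathcal{S}_{n_t}$ (as noted in Section~\ref{sec2a}), $H_t\in\mathcal{S}_{n_t}$ by Lemma~\ref{lemma:pred-var-quad} (this also holds for $t=0$ via the same factorization $4\bar Q_0^\top\Sigma\bar Q_0$), and $\lambda\ge0$, since the PSD cone is closed under nonnegative combinations.

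The main obstacle is only the careful handling of the $t=0$ term and of integrability when splitting and recombining expectations. I expect no substantive difficulty beyond that.
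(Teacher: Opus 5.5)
Your proposal is correct and matches the paper's argument, which simply applies Lemma~\ref{lemma:pred-var-quad} to each $\mathbb{E}(\Delta_t^2)$ in \eqref{eq:obj} and uses the square-integrability of $\eta_0,\dots,\eta_N$ to split and recombine expectations. Your add-and-subtract of the $t=0$ term (with $\eta_0=\mathbf{x}_0$ a.s., $H_0=4Q_{00}\Sigma Q_{00}$, $\zeta_0=4Q_{00}\gamma$) is exactly the bookkeeping that produces the constant $c$; the paper leaves this step implicit.
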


\emph{Proof.} Apply Lemma~\ref{lemma:pred-var-quad} to the definition of $\mathcal{J}_\lambda(\mathbf{u})$ \eqref{eq:obj}, particularly using the fact that $\eta_0,\dots,\eta_N$ 
are square-integrable random vectors under Assumption \ref{assu1}. \qed


Substituting \eqref{eq:obj-alt} into Problem~\ref{prob:mean-pv} yields the following problem (where we omit the constant term $c$):


\begin{problem} \label{prob:mean-pv-quad}
    Under Assumption (A2),
    consider the optimal control problem (each $c_t$ is defined in \eqref{eq:stage-cost}):
    \begin{align*}
        \mathcal{J}_\lambda^*(\mathbf{x}_0) \coloneqq  & \inf_\mathbf{u} && \textstyle \mathbb{E}\left(c_N(\eta_N) + \sum_{t=0}^{N-1}c_t(\eta_t,u_t)\right) \\
          & \; \mathrm{subject\;to} && \mathrm{dynamics}~\eqref{linsys}, \;x_0 = \mathbf{x}_0 \mathrm{\; a.s.},  \\
          & && u_t \in \mathcal{L}^2(\mathcal{F}_t), \; t \in \{0,\ldots,N-1\} .
    \end{align*}
\end{problem}


Observe that the objectives of Problems~\ref{prob:mean-pv} and \ref{prob:mean-pv-quad} merely differ by a constant, and the constraints of these problems are identical. Thus, if $\mathbf{u}^*$ is optimal for Problem~\ref{prob:mean-pv}, then $\mathbf{u}^*$ is optimal for Problem~\ref{prob:mean-pv-quad}, and vice versa. The objective of Problem~\ref{prob:mean-pv-quad} enjoys a particularly convenient form, being a sum of convex quadratic costs in $\eta_t$ and $u_t$.
This is useful for deriving an analytical expression for an optimal controller, to be presented next.

\section{Risk-Aware Optimal Controller Synthesis} \label{sec:opt-control}

Here, we show that an optimal controller can be computed via a Riccati recursion and only requires the truncated state history $\eta_t$ instead of the full history $h_t$. 
This is achieved by reformulating the system dynamics and the control constraints, using $\eta_t$ as an augmented state.

Despite the dynamics~\eqref{linsys} being Markovian (in the original state $x_t$),
dynamic programming 
cannot be applied directly to Problem~\ref{prob:mean-pv-quad} in its current form because $c_t$ is a function of $\eta_t$ rather than $x_t$ alone.
To circumvent this issue, we express the dynamics~\eqref{linsys} in terms of $\eta_t$, regarding $\eta_t$ as a $\mathbb{R}^{n_t}$-valued random augmented state vector. For every $t \in \{0,\dots,N-1\}$, we define
\begin{align*}
    A_t & \coloneqq \begin{cases}
        \begin{bmatrix}
            A & 0_{n \times n\mathbf{k}_t}
        \end{bmatrix} & \text{if }\mathbf{k}_t \ge 1, \\
        A & \text{if }\mathbf{k}_t = 0,
    \end{cases} \quad \text{for } \mathbf{k} \in \{0,\dots,N\}, \\
    D_t & \coloneqq \begin{cases}
        \begin{bmatrix}
            I_{n\mathbf{k}} & 0_{n\mathbf{k} \times n}
        \end{bmatrix} & \text{if }t \ge \mathbf{k}, \\
        I_{n(t+1)} & \text{if }t < \mathbf{k},
    \end{cases}
    \quad \text{for } \mathbf{k}\in \{1,\dots,N\}.
\end{align*}
Observe that $A_t$ satisfies the relation $A_t \eta_t = A x_t$, and if $t \ge \mathbf{k}$, then $D_t \eta_t$ equals $\eta_t$ without the earliest state $x_{t-\mathbf{k}}$. Then, the augmented dynamics are
\begin{equation} \label{augsys}
    \eta_{t+1} = f_t(\eta_t,u_t,w_t), \quad t \in \{0,\dots,N-1\},
\end{equation}
where we define $f_t: \mathbb{R}^{n_t} \times \mathbb{R}^m \times \mathbb{R}^n \to \mathbb{R}^{n_{t+1}}$ by
  $  f_t(\eta,u,w) \coloneqq \tilde{A}_t \eta + \tilde{B}_t u + \tilde{C}_t w $
and the matrices by
\begin{equation*}
    \tilde{A}_t \coloneqq \begin{bmatrix}
        A_t \\ D_t
    \end{bmatrix}, \;
    \tilde{B}_t \coloneqq \begin{bmatrix}
        B \\ 0_{n\mathbf{k}_{t+1} \times m}
    \end{bmatrix}, \;
    \tilde{C}_t \coloneqq \begin{bmatrix}
        I_n \\ 0_{n\mathbf{k}_{t+1} \times n}
    \end{bmatrix}
\end{equation*}
if $\mathbf{k} \in \{1,\dots,N\}$,
whereas $\tilde{A}_t \coloneqq A$, $\tilde{B}_t \coloneqq B$, and $\tilde{C}_t \coloneqq I_n$ if $\mathbf{k} = 0$.
The augmented dynamics \eqref{augsys} are Markovian (in $\eta_t$) and linear. 
However, unlike a standard LQR set-up, the state space $\mathbb{R}^{n_t}$ of \eqref{augsys} is time-varying.
For convenience, we define $\mathcal{G}_t \coloneqq \sigma(g_t)$ with $g_t \coloneqq [\eta_0^\top,u_0^\top,\dots,\eta_{t-1}^\top,u_{t-1}^\top,\eta_t^\top]^\top$ if $t \in \{1,\dots,N\}$ and $g_0 \coloneqq \eta_0$, analogous to the definitions of $\mathcal{F}_t$ and $h_t$, respectively.
As a final step in the reformulation,
we pose the following problem 
and show its connection to
Problems~\ref{prob:mean-pv}--\ref{prob:mean-pv-quad} (see Theorem~\ref{prop:equiv}):

\begin{problem} \label{prob:mean-pv-aug}
    Under Assumption (A2), 
    consider the optimal control problem (each $c_t$ is defined in \eqref{eq:stage-cost}):
    \begin{align*}
          & \inf_\mathbf{u} && \textstyle \mathbb{E}\left(c_N(\eta_N) + \sum_{t=0}^{N-1}c_t(\eta_t,u_t)\right) \\
          & \; \mathrm{subject\;to} && \mathrm{dynamics}~\eqref{augsys}, \; \eta_0 = \mathbf{x}_0 \mathrm{\; a.s.}, \\
          & && u_t \in \mathcal{L}^2(\mathcal{G}_t), \; t \in \{0,\ldots,N-1\} .
    \end{align*}
\end{problem}

\begin{theorem} \label{prop:equiv}
    If $\mathbf{u}^*$ is optimal for Problem~\ref{prob:mean-pv}, \ref{prob:mean-pv-quad}, or \ref{prob:mean-pv-aug}, then $\mathbf{u}^*$ is optimal for the remaining two problems.
\end{theorem}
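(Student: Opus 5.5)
The equivalence of Problems~\ref{prob:mean-pv} and \ref{prob:mean-pv-quad} has already been settled right after Lemma~\ref{lemma:obj-quad}. Their constraint sets coincide and, by \eqref{eq:obj-alt}, their objectives differ by the constant $c$, which does not depend on $\mathbf{u}$. It therefore remains to show that Problems~\ref{prob:mean-pv-quad} and \ref{prob:mean-pv-aug} have the same feasible set and assign the same objective value to every feasible $\mathbf{u}$. Optimality then transfers in both directions, and chaining the two equivalences gives the three-way statement.

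\textbf{Step 1: same state process.} Fix $\mathbf{u}$ and let $x_t$ be generated by \eqref{linsys} with $x_0=\mathbf{x}_0$. Define $\eta_t$ from these $x_t$ as in Section~\ref{sec2a}. By induction on $t$, $\eta_{t+1}=\tilde A_t\eta_t+\tilde B_t u_t+\tilde C_t w_t$. The first block row is \eqref{linsys}, because $A_t\eta_t=Ax_t$. The remaining block rows are $D_t\eta_t$, which shifts the history and drops $x_{t-\mathbf{k}}$ when $t\ge\mathbf{k}$ and keeps all of $\eta_t$ when $t<\mathbf{k}$. The dimensions match in both cases: $n_{t+1}=n+n\mathbf{k}$ when $t\ge \mathbf{k}$, and $n+n(t+1)$ otherwise. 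Conversely, the first $n$ entries of a solution of \eqref{augsys} with $\eta_0=\mathbf{x}_0$ satisfy \eqref{linsys}. The case $\mathbf{k}=0$ is trivial, since then $\eta_t=x_t$. Hence, for the same $\mathbf{u}$, both problems produce the same $\eta_t$ almost surely and therefore the same value of $\mathbb{E}(c_N(\eta_N)+\sum_t c_t(\eta_t,u_t))$.

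\textbf{Step 2: same information structure.} I will show $\mathcal{F}_t=\mathcal{G}_t$ for every $t$. The vector $g_t$ is a deterministic linear (hence Borel) function of $h_t$, because each $\eta_s$ is a subvector rearrangement of $x_s,\dots,x_{s-\mathbf{k}_s}$, all of which lie in $h_t$ for $s\le t$. Conversely, $h_t$ is a linear function of $g_t$, because $x_s$ is the first block of $\eta_s$. Measurable functions generate nested $\sigma$-algebras (Doob--Dynkin, or directly \cite{ash1972probability}), so $\sigma(g_t)\subseteq\sigma(h_t)$ and $\sigma(h_t)\subseteq\sigma(g_t)$. Thus $\mathcal{L}^2(\mathcal{F}_t)=\mathcal{L}^2(\mathcal{G}_t)$, and the two feasible sets coincide.

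The main subtlety is Step~2, together with the circularity it hides. Both $h_t$ and $g_t$ depend on $\mathbf{u}$, so the $\sigma$-algebras are policy-dependent. The argument must therefore be phrased per candidate $\mathbf{u}$: $\mathbf{u}$ is feasible for one problem iff each $u_t$ is measurable with respect to $\sigma(h_t)=\sigma(g_t)$, where both are built from the same trajectory, which is available by Step~1 applied inductively in $t$. The conclusion is that the feasible sets are equal and the objectives are equal on them. Hence the infima are equal and the optimizers are the same, which, combined with the earlier equivalence, proves Theorem~\ref{prop:equiv}.
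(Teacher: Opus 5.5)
Your proposal is correct and follows essentially the same route as the paper. Problems~1 and~2 differ only by the constant $c$. For Problems~2 and~3 you argue, as the paper does, that the lifted and reduced dynamics generate the same $\eta_t$, so the objectives coincide, and that $g_t$ and $h_t$ are (linear, hence Borel) functions of each other, so $\sigma(g_t)=\sigma(h_t)$ and the control constraints agree. Your explicit dimension check for $D_t$ and your remark that the $\sigma$-algebras must be compared for each candidate $\mathbf{u}$ are useful refinements of details the paper leaves implicit.
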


\emph{Proof.}
We already explained the above statement regarding Problems~\ref{prob:mean-pv} and \ref{prob:mean-pv-quad} in Section \ref{sec:reduction}. Now, since the objectives of Problems~\ref{prob:mean-pv-quad} and \ref{prob:mean-pv-aug} are identical, it suffices to confirm that their constraints are equivalent.
We showed that the original dynamics~\eqref{linsys} can be lifted to the augmented dynamics~\eqref{augsys} previously in Section \ref{sec:opt-control}. Conversely, \eqref{augsys} can be reduced to \eqref{linsys} by considering the first $n$ entries of $\eta_{t+1}$. Problems~\ref{prob:mean-pv-quad} and \ref{prob:mean-pv-aug} have the same initial condition $\eta_0 = x_0 = \mathbf{x}_0$ a.s. To see that their control constraints are equivalent, first observe that $g_t = h_t$ if $\mathbf{k} = 0$. Otherwise, if $\mathbf{k} \geq 1$, then 
$g_t$ and $h_t$ are still measurable maps of each other,
leading to $u_t \in \mathcal{L}^2(\mathcal{F}_t)$ if and only if $u_t \in \mathcal{L}^2(\mathcal{G}_t)$.
%
\qed


Overall, we have reformulated the original nonstandard problem (Problem~\ref{prob:mean-pv}) into a problem (Problem~\ref{prob:mean-pv-aug}) that enjoys a standard form, and more specifically, a standard dynamic-programming-based solution. 


\begin{corollary} \label{thm:main}
    Let all conditions and definitions pertaining to Problem~\ref{prob:mean-pv-aug} hold. Define the functions $J_t$ on $\mathbb{R}^{n_t}$ 
    by $J_N \coloneqq c_N$, and for each $t = N-1,\dots,0$,
    \begin{equation} \label{eq:Jt}
        J_t(\eta) \coloneqq \inf_{u \in \mathbb{R}^m} \Big( c_t(\eta, u) + \mathbb{E}(J_{t+1}(f_t(\eta,u,w_t))) \Big).
    \end{equation}
    Then, $J_t$ is convex quadratic for every $ t \in \{0,\ldots,N\}$ and an affine function $\mu_t^*: \mathbb{R}^{n_t} \to \mathbb{R}^m$ attains the infimum in \eqref{eq:Jt} for every $ t \in \{0,\ldots,N-1\}$. In addition, $\mathbf{u}^* \coloneqq (u_0^*,\ldots,u_{N-1}^*)$, where $u_t^* \coloneqq \mu_t^* \circ \eta_t$, is an optimal controller for Problem~\ref{prob:mean-pv-aug}, 
    and the optimal value of Problem~\ref{prob:mean-pv-aug} is $\mathcal{J}_\lambda^*(\mathbf{x}_0) = J_0(\mathbf{x}_0)$. 
    Particularly, for every $t \in \{0,\ldots,N-1\}$ and $\eta \in \mathbb{R}^{n_t}$, we have
    \begin{equation*} 
        J_t(\eta)  = \eta^\top P_t \eta + q_t^\top \eta + r_{t} \quad \text{and} \quad   
        \mu_t^*(\eta)  = K_t \eta + \kappa_t, 
    \end{equation*}
    where $P_t \in \mathcal{S}_{n_t}$, $q_t$, $r_t$, $K_t$, and $\kappa_t$ satisfy the following Riccati recursion:
    \begin{equation*}\label{eq:recursion}\begin{aligned}
        P_t & \coloneqq - \tilde{A}_t^\top P_{t+1} \tilde{B}_t (\tilde{B}_t^\top P_{t+1} \tilde{B}_t + R)^{-1} \tilde{B}_t^\top P_{t+1} \tilde{A}_t \nonumber \\
        & \; \quad + \mathcal{Q}_{\lambda,t} + \tilde{A}_t^\top P_{t+1} \tilde{A}_t \\
        q_t & \coloneqq \lambda \zeta_t + (\tilde{A}_t + \tilde{B}_t K_t)^\top (q_{t+1} + 2P_{t+1} \tilde{C}_t \bar{w}) \\
        r_t & \coloneqq r_{t+1} + \mathrm{tr}((\Sigma + \bar{w}\bar{w}^\top) \tilde{C}_t^\top P_{t+1} \tilde{C}_t) + q_{t+1}^\top \tilde{C}_t \bar{w} \nonumber \\
        & \; \quad - \kappa_t^\top (\tilde{B}_t^\top P_{t+1} \tilde{B}_t + R) \kappa_t  \\
        K_t & \coloneqq -(\tilde{B}_t^\top P_{t+1} \tilde{B}_t + R)^{-1} \tilde{B}_t^\top P_{t+1} \tilde{A}_t  \\
        \kappa_t & \coloneqq \textstyle -\frac{1}{2}(\tilde{B}_t^\top P_{t+1} \tilde{B}_t + R)^{-1} \tilde{B}_t^\top (q_{t+1} + 2P_{t+1} \tilde{C}_t \bar{w})
    \end{aligned}\end{equation*}
    with $P_N \coloneqq \mathcal{Q}_{\lambda,N}$, $q_N \coloneqq \lambda \zeta_N$, and $r_N \coloneqq 0$.
\end{corollary}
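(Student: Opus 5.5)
The plan is backward induction on $t$, establishing that $J_t(\eta)=\eta^\top P_t\eta+q_t^\top\eta+r_t$ with $P_t\in\mathcal{S}_{n_t}$. We then verify optimality through a standard DP verification argument over causal, square-integrable policies.

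\emph{Base case.} At $t=N$ we have $J_N=c_N$, so $P_N=\mathcal{Q}_{\lambda,N}=\mathcal{Q}_N+\lambda H_N$. This matrix lies in $\mathcal{S}_{n_N}$ because $\mathcal{Q}_N$ is a principal submatrix of $\mathcal{Q}\in\mathcal{S}_{n(\mathbf{k}+1)}$ and $H_N=4\bar Q_N^\top\Sigma\bar Q_N\succeq 0$ (Lemma~\ref{lemma:pred-var-quad}). Also $q_N=\lambda\zeta_N$ and $r_N=0$.

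\emph{Inductive step.} Suppose $J_{t+1}$ has the stated form with $P_{t+1}\succeq0$. Write $f_t(\eta,u,w_t)=v+\tilde C_t w_t$ with $v=\tilde A_t\eta+\tilde B_t u$. Using $\mathbb{E}(w_t)=\bar w$ and $\mathbb{E}(w_tw_t^\top)=\Sigma+\bar w\bar w^\top$, which is finite under (A2), we get
\[
\mathbb{E}(J_{t+1}(f_t))=v^\top P_{t+1}v+(q_{t+1}+2P_{t+1}\tilde C_t\bar w)^\top v+\mathrm{tr}((\Sigma+\bar w\bar w^\top)\tilde C_t^\top P_{t+1}\tilde C_t)+q_{t+1}^\top\tilde C_t\bar w+r_{t+1}.
\]
Adding $c_t(\eta,u)$ gives a quadratic in $u$ with Hessian $2(\tilde B_t^\top P_{t+1}\tilde B_t+R)$. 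Since $R\in\mathcal{S}_m^+$ and $P_{t+1}\succeq0$, this Hessian is positive definite. The objective is therefore strictly convex in $u$, and its unique minimizer is obtained from the first-order condition, namely $u=K_t\eta+\kappa_t$ with $K_t,\kappa_t$ as displayed. Substituting back (completing the square) gives the recursions for $P_t$, $q_t$ and $r_t$. For $q_t$ we use $K_t^\top(\tilde B_t^\top P_{t+1}\tilde B_t+R)\kappa_t=-\tfrac12 K_t^\top\tilde B_t^\top(q_{t+1}+2P_{t+1}\tilde C_t\bar w)$ to collect the terms into the form $(\tilde A_t+\tilde B_tK_t)^\top(\cdots)$.

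\emph{Convexity.} To show $P_t\succeq0$, observe that $J_t$ is the partial minimization over $u$ of a function that is jointly convex in $(\eta,u)$. That function is $c_t$ plus the expectation of the convex $J_{t+1}$ composed with an affine map. Hence $J_t$ is convex, and its quadratic part satisfies $P_t\succeq0$. Alternatively, $P_t=\mathcal{Q}_{\lambda,t}+(\tilde A_t+\tilde B_tK_t)^\top P_{t+1}(\tilde A_t+\tilde B_tK_t)+K_t^\top RK_t$. Symmetry of $P_t$ is immediate from the formula.

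\emph{Verification.} This is the main obstacle, since the admissible class is general $\mathcal{G}_t$-measurable square-integrable controls rather than Markov policies. Take any admissible $\mathbf u$. The states $\eta_t$ are square integrable, and $w_t$ is independent of $\mathcal{G}_t$, because $g_t$ is a measurable function of $x_0,w_0,\dots,w_{t-1}$ and the controls are causal. Therefore
\[
\mathbb{E}(J_{t+1}(\eta_{t+1})\mid\mathcal{G}_t)=\mathbb{E}(J_{t+1}(f_t(\eta,u,w_t)))\big|_{\eta=\eta_t,u=u_t}\quad\text{a.s.}
\]
This follows from the independence/freezing lemma, and integrability holds because $J_{t+1}$ is quadratic and all variables are in $\mathcal{L}^2$. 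The definition of $J_t$ as an infimum then gives $c_t(\eta_t,u_t)+\mathbb{E}(J_{t+1}(\eta_{t+1})\mid\mathcal{G}_t)\ge J_t(\eta_t)$, with equality when $u_t=\mu_t^*(\eta_t)$. Taking expectations and telescoping from $t=N-1$ down to $0$ yields that the objective is at least $\mathbb{E}(J_0(\eta_0))=J_0(\mathbf{x}_0)$, with equality for $\mathbf u^*$. Finally, $\mathbf u^*$ is admissible: $u_t^*$ is an affine function of the square-integrable, $\mathcal{G}_t$-measurable $\eta_t$, so $u_t^*\in\mathcal{L}^2(\mathcal{G}_t)$ and the closed-loop states stay in $\mathcal{L}^2$ by induction. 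Hence $\mathcal{J}^*_\lambda(\mathbf{x}_0)=J_0(\mathbf{x}_0)$, where this is the optimal value of Problem~\ref{prob:mean-pv-aug}, equal to that of Problem~\ref{prob:mean-pv-quad} by Theorem~\ref{prop:equiv}.
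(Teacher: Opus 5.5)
Your proof is correct and follows the same dynamic-programming route as the paper. The paper invokes standard finite-horizon DP theorems for nonstationary models and then equates the optimal values of Problems~\ref{prob:mean-pv-quad} and~\ref{prob:mean-pv-aug} via Theorem~\ref{prop:equiv}; you instead carry out explicitly what those citations provide, namely the completing-the-square derivation of the Riccati recursion, the preservation of $P_t \succeq 0$, and the verification argument over history-dependent $\mathcal{G}_t$-measurable controls.
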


\emph{Proof.}
The result follows from dynamic programming, e.g., see \cite[Th.~3.2.1]{hernandez2012discrete} and \cite[Ch.~6, Th.~2.15]{kumar2015stochastic}, 
where the model is nonstationary \cite[p. 32]{hernandez2012discrete} 
because the state space $\mathbb{R}^{n_t}$, the dynamics map $f_t$, and the stage cost function $c_t$ depend on time. The optimal values of Problems~\ref{prob:mean-pv-quad} and \ref{prob:mean-pv-aug} are equal (proof of Theorem~\ref{prop:equiv}); the optimal value of Problem~\ref{prob:mean-pv-aug} is $J_0(\mathbf{x}_0)$; and we denoted the optimal value of Problem~\ref{prob:mean-pv-quad} by $\mathcal{J}_\lambda^*(\mathbf{x}_0)$. Hence, $\mathcal{J}_\lambda^*(\mathbf{x}_0) = J_0(\mathbf{x}_0)$. \qed

Let us comment on the structure of $\mathbf{u}^*$.
First, at each time $t$, the control $u_t^*$ is an affine feedback of the truncated state history $\eta_t$, which contains at most $\mathbf{k}+1$ recent states,
instead of the full history.
Second, $\tilde{B}_t^\top P_{t+1} \tilde{B}_t + R \in \mathcal{S}_m^+$ is the only matrix being inverted in the recursion, and its dimension does not depend on $\mathbf{k}$.
Third, if $\mathbf{k}=0$, then $\mathbf{u}^*$
simplifies to the predictive variance controllers (without temporal coupling) in \cite[Th.~2]{tsiamis2020risk} and \cite[Th.~5]{tsiamis2025linear}. 
%
%
%
If furthermore $\lambda=0$ and $\bar{w}=0_{n\times 1}$, then $\mathbf{u}^*$ becomes the linear state-feedback controller for standard stochastic LQR.
%
While enjoying a familiar analytical form, $\mathbf{u}^*$ in the case of $\mathbf{k} \geq 1$ allows the user to consider the aims of state regulation and temporal variability reduction simultaneously, as illustrated next. 

\section{Numerical Examples} \label{sec:num}

Consider a point mass moving in a two-dimensional $\mu$--$\nu$-plane according to a discrete-time dynamics model $x_{t+1} = A x_t + Bu_t + B\xi_t$ for $t\in\{0,\ldots,N-1\}$, where
\begin{equation*}
    A = \begin{bmatrix}
        1 & T_s & 0 & 0\\
        0 & 1 & 0 & 0\\
        0 & 0 & 1 & T_s\\
        0 & 0 & 0 & 1
    \end{bmatrix}, \quad  B =  \begin{bmatrix}
        0 & 0\\
        \frac{T_s}{M} & 0\\
        0 & 0\\
        0 & \frac{T_s}{M}
    \end{bmatrix},
\end{equation*}
$T_s = 0.2$ is the sampling period, $M = 1$ is the mass, and $N = 100$.
$x_t$ contains position $p_t \coloneqq [p_{t,\mu} \;p_{t,\nu}]^\top = [x_{t,1} \; x_{t,3}]^\top$ and velocity; $x_{t,2}$ represents velocity in the $\mu$-direction. $u_t$ is the control force, while $\xi_t \coloneqq [\xi_{t,\mu} \;\xi_{t,\nu}]^\top$ is the disturbance force. As well as $\xi_0,\dots,\xi_{N-1}$ being i.i.d., $\xi_{t,\mu}$ and $\xi_{t,\nu}$ are independent for each $t$.
The point mass occasionally experiences a jolt in the $\mu$-direction,
resembling examples from \cite{tsiamis2020risk, tsiamis2025linear, chapman2024risk}, modeled via the distribution of $\xi_t$.
Precisely, $\xi_{t,\mu}$ has a Gaussian mixture distribution $0.8 \times \mathcal{N}(0,10) + 0.2 \times \mathcal{N}(70,70)$, whereas $\xi_{t,\nu}$ has a Gaussian distribution $\mathcal{N}(0,10)$.
Hence, $B\xi_t$ satisfies Assumption~(A2).  
For a fixed $\mathbf{k}$, $\beta \geq 0$, and $Q \in \mathcal{S}_n$, we use an instance of $z_t$ \eqref{eq:zt} such that $z_t  = x_t^\top Qx_t + \beta \textstyle \sum_{i=1}^\mathbf{k} (x_t - x_{t-i})^\top Q (x_t - x_{t-i})$ for $t \geq \mathbf{k}$. 
We set $\mathbf{x}_0 = [5 \;\, 0 \;\, 5 \;\, 0]^\top$,  $R = I_2$, and $Q = \text{diag}(2,0.1,1,0.1)$.
%
%
Our code is available from \url{https://github.com/AlfredWei1/Temporal_Coupling_2026}.

In what follows, we examine the effect of different choices of $\theta \coloneqq (\beta, \mathbf{k}, \lambda)$ on system performance using metrics that intuitively describe the behavior of the point mass, instead of ones that precisely match the objective~\eqref{eq:obj}:
\begin{itemize}
    \item Total distance traveled $\mathcal{D}_\text{tot} \coloneqq \sum_{t=1}^{N} |p_t - p_{t-1}|$;
    \item Total control effort $\mathcal{U}_{\text{tot}} \coloneqq \sum_{t=0}^{N-1} |u_t|$;
    \item Max distance from the origin $\mathcal{P}_\text{max} \coloneqq \max_{t=0,\dots,N} |p_t|$;
    \item Their respective means $\bar{\mathcal{D}}_\text{tot}$, $\bar{\mathcal{U}}_{\text{tot}}$, and  $\bar{\mathcal{P}}_\text{max}$ over some set of simulations;
    \item Length $\ell_{t,i}$ of the interval centered at the median of $x_{t,i}$ that contains 95\% of the samples of $x_{t,i}$ for $i \in \{ 1,2\}$.
\end{itemize}
We quantify temporal variability, control effort, stabilization, and stochastic variability in terms of  $\bar{\mathcal{D}}_\text{tot}$, $\bar{\mathcal{U}}_{\text{tot}}$, $\bar{\mathcal{P}}_\text{max}$, and $(\ell_{t,1},\ell_{t,2})$ vs. $t$, respectively.
Their relative importance in tuning $\theta$ is application-specific, and we leave the choice of an ``optimal'' $\theta$ to the user.
We use the notation $\theta = (\beta, \mathbf{k}, \lambda)$ to refer to $\mathbf{u}^*$ with that choice of $\theta$.

First, we depict the behavior of the point mass when $\beta = \mathbf{k} = 1$ for three values of $\lambda$ in Fig. 1(a). As $\lambda$ increases,
we observe reductions in stochastic and temporal variabilities, along with a rise in control effort. The tradeoff between stochastic variability and control effort is consistent with knowledge about the original predictive variance controller \cite[Figs. 3--4]{tsiamis2020risk}.
%
%
The case of $\lambda = 1$ ($\theta_2$) offers a middle ground with reduced stochastic variability and some deviation from the origin (see the mean positions near the origin). In contrast, the case of $\lambda = 6$ ($\theta_3$) exhibits more reduction in $\ell_{t,2}$ but also more deviation from the origin.

In Fig. 1(b), we consider the controller from \cite{tsiamis2020risk} ($\theta_4$) and two controllers with temporal coupling ($\theta_5$, $\theta_6$), all with $\lambda = 1$. At the cost of larger control effort, $\theta_5$ and $\theta_6$ lead to smaller temporal variability (see $\bar{\mathcal{D}}_\text{tot}$); this is also evident from their more densely spaced mean positions in the early horizon. Thus, temporal coupling enables the generation of slowly varying trajectories, as anticipated. We also find that temporal coupling can reduce uncertainty. Despite $\lambda$ being fixed, stochastic variability is typically smaller in the cases of $\theta_5$ and $\theta_6$ compared to $\theta_4$. Seeing a large increase in $\bar{\mathcal{U}}_\text{tot}$ but little change in $\bar{\mathcal{D}}_\text{tot}$ in the case of $\theta_6$ versus that of $\theta_5$, next we study the effect of varying $\beta$, $\mathbf{k}$, and $\lambda$ simultaneously. 

We present $\bar{\mathcal{D}}_{\text{tot}}$ and $\bar{\mathcal{U}}_{\text{tot}}$ versus $\bar{\mathcal{P}}_{\text{max}}$ in Fig. 2 for performance analysis. 
Fig. 2(i.c) indicates that the choice of $\mathbf{k} = \lambda = 1$ exhibits marginal improvement in $\bar{\mathcal{D}}_{\text{tot}}$ at the cost of accelerating $\bar{\mathcal{U}}_{\text{tot}}$ as $\beta$ increases. However, for some choices of $\mathbf{k} > 1$ and $\lambda > 0$, the $\bar{\mathcal{U}}_{\text{tot}}$ versus $\bar{\mathcal{P}}_{\text{max}}$ curve does not accelerate as such, but at the price of increasing $\bar{\mathcal{P}}_{\text{max}}$ (Fig. 2(ii.b, iii.b, ii.c, iii.c)). 
In the case of $\mathbf{k} = 9$ and $\lambda = 0$ (Fig. 2(iii.a)), the $\bar{\mathcal{D}}_\text{tot}$ versus $\bar{\mathcal{P}}_{\text{max}}$ curve continues to improve as $\beta$ increases, but with rising control effort; $\beta = 10$ achieves the lowest values of $\bar{\mathcal{D}}_\text{tot}$ and $\bar{\mathcal{P}}_{\text{max}}$ simultaneously across all parameter choices in Fig. 2. Yet, this benefit comes at the cost of larger control effort compared to the choices we investigate next. 

We consider $\theta_8 \coloneqq (2,9,0)$  and $\theta_9 \coloneqq (1.5,9,0.2)$, which have approximately the same $\bar{\mathcal{U}}_{\text{tot}}$. 
Fig. 1(b,c) presents the behavior of the point mass under $\theta_8$, $\theta_9$, and the benchmarks $\theta_4$, $\theta_7 \coloneqq (0,0,0)$.
Compared to $\theta_7$, $\theta_4$ reduces stochastic variability (Fig. 1(c)) but exerts more control effort, a known tradeoff \cite{tsiamis2020risk} (mentioned before). $\theta_4$ also reduces temporal variability compared to $\theta_7$.
Exerting even more control effort, $\theta_8$ and $\theta_9$ further reduce stochastic and temporal variabilities.
$\theta_8$ outperforms $\theta_9$ in terms of $\ell_{t,1}$ after about $t = 30$, whereas $\theta_9$ generally outperforms $\theta_8$ in terms of $\ell_{t,2}$. The ability of $\theta_9$ to reduce uncertainty in $x_{t,2}$ is not so surprising due to the specific form of $\Sigma$ and $\gamma$ in our setup. Most entries of $\Sigma$ and $\gamma$ are zero or small, but $\Sigma_{22}$ and $\gamma_2$, which correspond to $x_{t,2}$, are large. The superior uncertainty reduction in $x_{t,2}$ under $\theta_9$ comes at the cost of more deviation from the origin, evident from a larger drift in the mean positions, compared to the drift observed under $\theta_8$.

The results herein show various tradeoffs and behaviors that can arise under $\mathbf{u}^*$ with different choices of $\theta$. We experimented with parameter values and used plots of $\bar{\mathcal{D}}_\text{tot}$, $\bar{\mathcal{U}}_\text{tot}$, and $\bar{\mathcal{P}}_\text{max}$ (e.g., see Fig. 2) to select values that led to interpretable trends. The results are specific to the setup and parameter values. For instance, a preliminary experiment revealed that a different choice of $(\beta,\mathbf{k})$ than that in Fig. 1(a) did not consistently reduce $\ell_{t,1}$ with increasing $\lambda$---a reasonable finding because $\mathbf{u}^*$ (in the case of $\lambda>0$) is designed to penalize uncertainty in $z_t$~\eqref{eq:zt}, which involves more states than just $x_{t,1}$. To apply $\mathbf{u}^*$ in a different setting, the user may start by generating plots like those in Fig. 2 to inform parameter selection as appropriate for their application.


 \begin{figure*}[htb]
 \begin{subfigure}[t]{\textwidth}
            \centering
            \includegraphics[width=\textwidth]{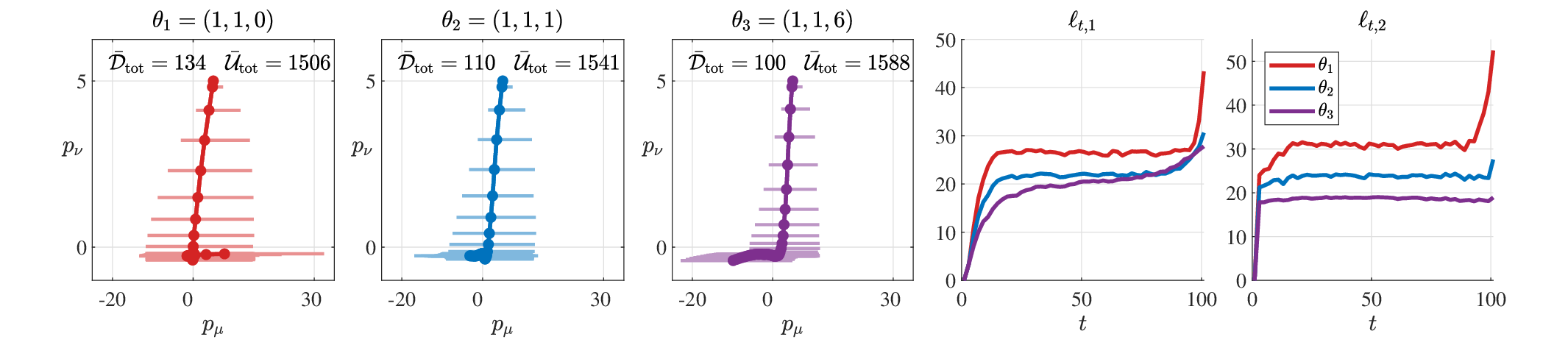}\vspace{-2mm}
            \caption{First experiment for controllers $\theta_1$, $\theta_2$, $\theta_3$, with $\mathbf{\beta} = \mathbf{k} = 1$ and $\lambda\in\{0,1,6\}$.} \vspace{2mm}
            \label{fig:fig1a}
    \end{subfigure}

    \begin{subfigure}[t]{\textwidth}
            \centering
            \includegraphics[width=\textwidth]{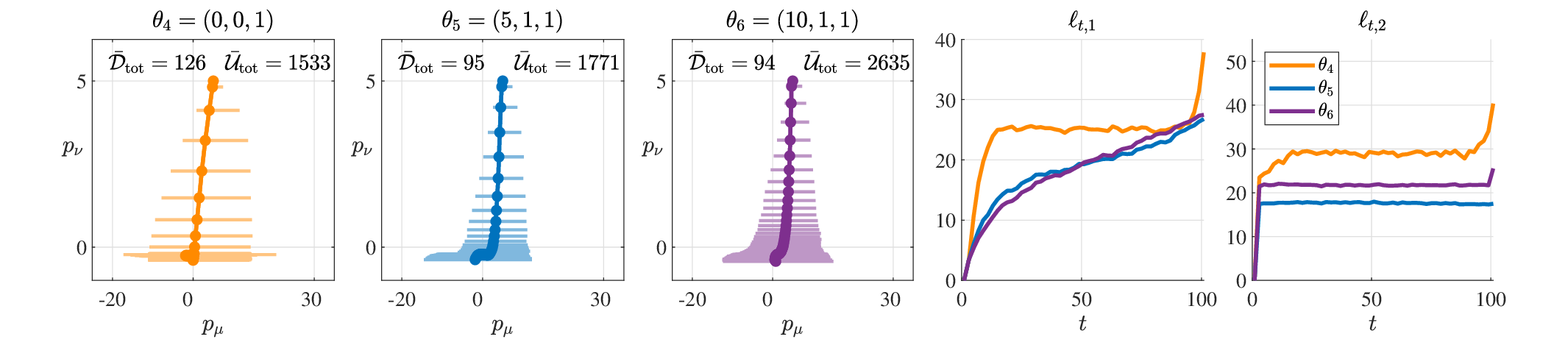}\vspace{-2mm}
            \caption{Second experiment for controllers $\theta_4$, $\theta_5$, $\theta_6$, with $\lambda = 1$; $\theta_4$ is from \cite{tsiamis2020risk}; $(\beta,\mathbf{k}) = (5,1)$ for $\theta_5$ and $(10,1)$ for $\theta_6$.} \vspace{2mm}
            \label{fig:fig1b}
    \end{subfigure}

    \begin{subfigure}[t]{\textwidth}
            \centering
            \includegraphics[width=\textwidth]{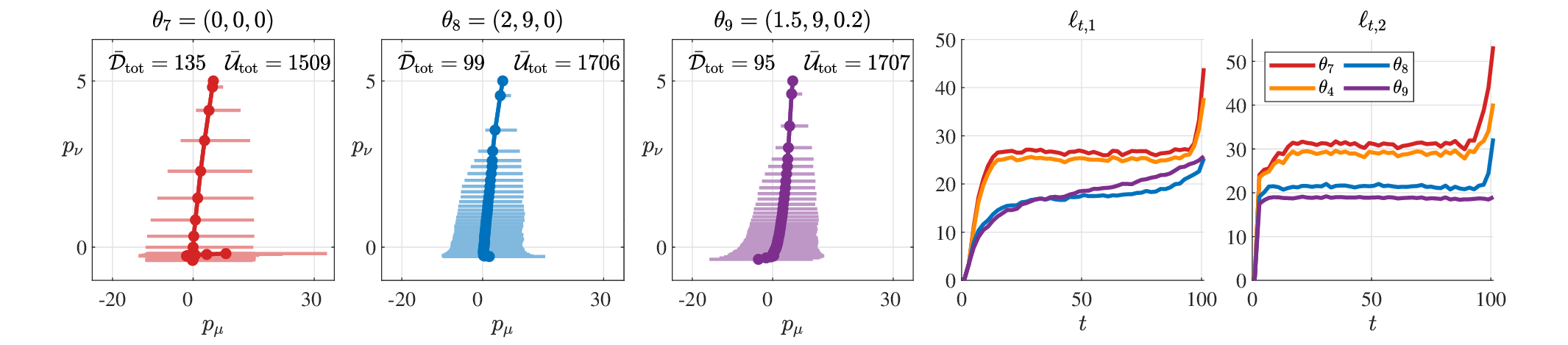}\vspace{-2mm}
            \caption{Third experiment for controllers $\theta_7$, $\theta_8$, $\theta_9$; we repeat $\theta_4$ from (b) in the $\ell_{t,1}$ and $\ell_{t,2}$ plots for the sake of comparison.} \vspace{2mm}
            \label{fig:fig1c}
    \end{subfigure}
    \caption{Performance of various controllers. Each row compares a set of controllers. Columns 1--3: Trajectories of the point mass from 5000 simulations under $\mathbf{u}^*$ with some choice of $\theta = (\beta,\mathbf{k},\lambda)$, and the corresponding values of $\bar{\mathcal{D}}_\text{tot}$ and $\bar{\mathcal{U}}_\text{tot}$; for each $t \in \{0,2,4,\dots,100\}$, we show the associated empirical mean position (dot) and interval of length $\ell_{t,1}$ containing 95\% of the samples of $p_{t,\mu}$ (centered at the median of $p_{t,\mu}$). Columns 4--5: $\ell_{t,1}$ and $\ell_{t,2}$ versus $t$.}
     \label{fig:main_fig}
 \end{figure*}

 \begin{figure}[h]
\definecolor{Dtotal}{RGB}{0, 114, 189}
\definecolor{Utotal}{RGB}{217, 83, 25}
\centering
\includegraphics[width = \columnwidth]{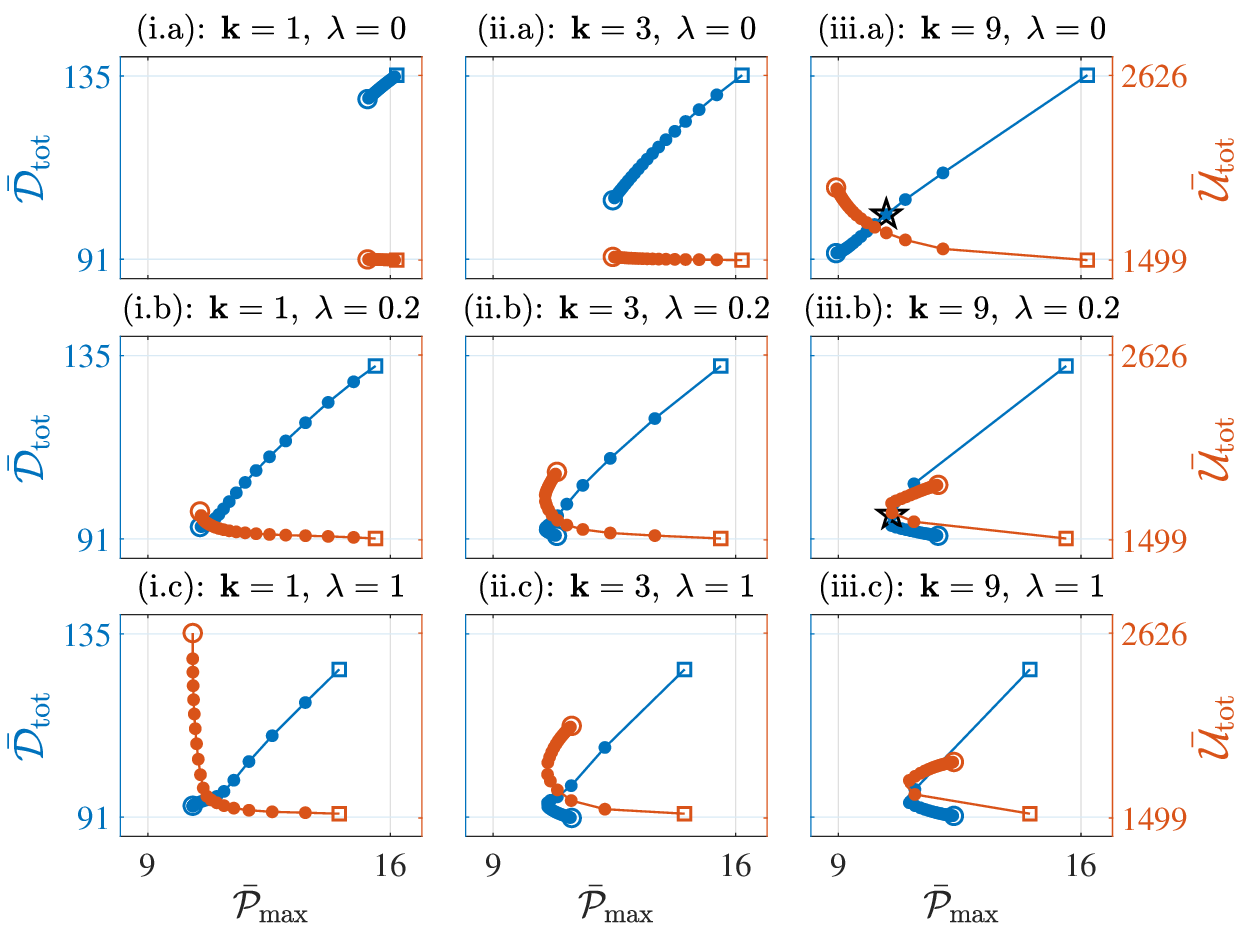}
    \caption{$\bar{\mathcal{D}}_\text{tot}$ versus $\bar{\mathcal{P}}_\text{max}$ (blue) and  $\bar{\mathcal{U}}_\text{tot}$ versus $\bar{\mathcal{P}}_\text{max}$ (red) calculated from $500$ simulations for each $\theta = (\beta,\mathbf{k},\lambda)$. In each subplot, $\beta \in \{0,0.5,\dots,10\}$ is varied for some fixed $(\mathbf{k},\lambda)$. The square and circle markers indicate $\beta = 0$ and $\beta = 10$, respectively.
    Note that $\beta=0$ represents the absence of temporal coupling, and corresponds to the controller from \cite{tsiamis2020risk} if $\lambda > 0$. A star marker corresponds to $\theta_8 = (2,9,0)$ or $\theta_9=(1.5,9,0.2)$.}
    \label{fig:comparison}
\end{figure}

\section{Conclusion} \label{sec:conclusion}
This work presents a trajectory-wise risk concept that, when incorporated into an optimal control problem for a stochastic linear system, admits an elegant closed-form solution.
The derived controller $\mathbf{u}^*$ can be viewed as one that shapes a stochastic state trajectory,
%
%
penalizing variability of the trajectory over time and also in a probabilistic sense. In the case of $\lambda > 0$, $\mathbf{u}^*$ admits an explicit risk-aware interpretation, while this benefit is lost in the case of $\lambda = 0$.
Nonetheless, we found experimentally that $\mathbf{u}^*$ with large $\mathbf{k}$ and $\lambda = 0$ can result in risk-aware system behavior (see $\theta_8$ in Fig.~\ref{fig:main_fig}(c)). Future research can examine further connections between temporal coupling and risk, possibly in other systems. Temporal or trajectory-wise risk concepts have potential utility in other stochastic control problems, such as those involving systems with data-based trajectory-wise representations
and systems with time delays. 
We hope the results presented here inspire other notions of risk and associated controllers that can be computed efficiently.

\begin{ack}           
M.P.C. thanks Manfredi Maggiore, Laurent Lessard, Marco Pavone, and Anirudha Majumdar for 
advice. 
\end{ack}

\appendix

\section{Proof of Lemma~\ref{lemma:delta-t-decomposition}} \label{app:lemma1}
The proof 
consists of two main steps. First, we prove the decomposition $\Delta_t^2 = \Delta_{t,1} + \Delta_{t,2} + \Delta_{t,3}$ a.s. \eqref{term123-decomp}. Second, 
we consider the individual terms in the sum. $\Delta_{t,1}$ was found to be integrable and its expectation was computed using elementary probability theory in \cite[Appendix]{tsiamis2020risk}. 
We justify why $\Delta_{t,2}$ and $\Delta_{t,3}$ are integrable and compute their expectations
using elementary probability theory
and any past state $x_\tau$ 
for $\tau \in \{0,\dots,t-1\}$
being $\mathcal{F}_{t-1}$-measurable. We find the following basic facts useful.

\begin{prop} \label{prop:expt-vector}
    Let $x$ and $y$ be $\mathbb{R}^n$-valued random vectors defined on 
    $(\Omega, \mathcal{G}, \mathbb{P})$, whose expectations exist. 
    \begin{enumerate}
        \item[(a)] Let $\mathcal{F}$ be a sub-$\sigma$-algebra of $\mathcal{G}$. If $y$ is $\mathcal{F}$-measurable and $\mathbb{E}(|x_i|), \mathbb{E}(|x_iy_i|) < \infty$ for every $i \in \{1, \ldots, n\}$, then $\mathbb{E}(x^\top y|\mathcal{F}) = \mathbb{E}(x|\mathcal{F})^\top y = y^\top \mathbb{E}(x|\mathcal{F})$ a.s.
        \item[(b)] If $x$ and $y$ are independent and integrable, then $x^\top y$ is integrable and $\mathbb{E}(x^\top y) = \mathbb{E}(x)^\top \mathbb{E}(y)$.
    \end{enumerate}
\end{prop}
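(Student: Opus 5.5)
We prove the two parts of Proposition~\ref{prop:expt-vector} by reducing them to scalar facts about conditional and ordinary expectation, applied coordinatewise and then summed. We write $x^\top y = \sum_{i=1}^n x_i y_i$ throughout, so both claims follow once the corresponding scalar statements hold for each $i$. Linearity of (conditional) expectation for integrable summands \cite[Th.~1.6.3]{ash1972probability} then finishes the argument.

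\emph{Part (a).} Fix $i$. By assumption, $x_i$ is integrable, $x_i y_i$ is integrable, and $y_i$ is $\mathcal{F}$-measurable, since it is a coordinate of an $\mathcal{F}$-measurable vector. We invoke the standard ``taking out what is known'' property of conditional expectation. In the form found in \cite{ash1972probability}, it requires only that $x_i$ and $x_iy_i$ be integrable and that $y_i$ be $\mathcal{F}$-measurable, and it gives $\mathbb{E}(x_i y_i|\mathcal{F}) = y_i\,\mathbb{E}(x_i|\mathcal{F})$ a.s. Since each $x_iy_i$ is integrable, $x^\top y$ is integrable. Conditional expectation is linear a.s., so
\[
\mathbb{E}(x^\top y|\mathcal{F}) = \sum_{i=1}^n \mathbb{E}(x_iy_i|\mathcal{F}) = \sum_{i=1}^n y_i\,\mathbb{E}(x_i|\mathcal{F}) \quad \text{a.s.}
\]
The right-hand side equals $\mathbb{E}(x|\mathcal{F})^\top y = y^\top \mathbb{E}(x|\mathcal{F})$ by the definition of the vector conditional expectation as the vector of componentwise conditional expectations. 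The only subtlety is that the product-rule hypothesis is exactly integrability of $x_i$ and $x_iy_i$, with no integrability required of $y_i$ itself. This is the version we must cite, and it is the main point to check carefully. If needed, we would prove it directly: verify the defining integral identity first for indicator $y_i$, then for simple $y_i$, and then pass to general $y_i$ via truncation and dominated convergence, using $|x_iy_i|$ as the dominating function.

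\emph{Part (b).} Fix $i$. Independence of the vectors $x$ and $y$ implies independence of the coordinates $x_i$ and $y_i$, since these are measurable functions of independent random vectors. The scalar product theorem for independent integrable random variables then gives that $x_iy_i$ is integrable, with $\mathbb{E}(x_iy_i)=\mathbb{E}(x_i)\mathbb{E}(y_i)$. Summing over $i$ shows that $x^\top y$ is integrable, being a finite sum of integrable terms, and that
\[
\mathbb{E}(x^\top y)=\sum_{i=1}^n \mathbb{E}(x_i)\mathbb{E}(y_i)=\mathbb{E}(x)^\top\mathbb{E}(y).
\]
No step here is delicate. The main obstacle in the whole proposition is therefore only the precise statement of the pull-out property in part (a).
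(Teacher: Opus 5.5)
Your proposal is correct and follows the same route as the paper. For part (a), the paper simply cites Ash's Th.~6.5.11(a), the pull-out property under exactly the hypotheses you identify, and your coordinatewise application plus linearity just spells out that step. For part (b), the paper likewise applies the product rule to the independent integrable coordinates $x_i, y_i$ and then interchanges the finite sum with the expectation.

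One minor remark on your optional fallback. The bound $|x_iy_i|$ dominates only the $x_iy_i$ side of the truncated defining identity. On the $y_i\,\mathbb{E}(x_i|\mathcal{F})$ side you would need a separate bound, such as $|y_i\,\mathbb{E}(x_i|\mathcal{F})| \le \mathbb{E}(|x_iy_i|\,|\,\mathcal{F})$ a.s., or monotone convergence on positive and negative parts. Since you rely on the cited theorem, this does not affect your argument.
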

\emph{Proof.} (a): The statement follows from \cite[Th.~6.5.11(a)]{ash1972probability}. (b): For any $i \in \{1,\dots,n\}$, $x_i$ and $y_i$ are independent and integrable, so $\mathbb{E}(x_i y_i) = \mathbb{E}(x_i)\mathbb{E}(y_i) \in \mathbb{R}$ \cite[Th. 5.10.8]{ash1972probability}. Since the order of finitely many expectations and sums can be switched when working with integrable random variables \cite[Th. 1.6.3]{ash1972probability}, $\mathbb{E}(x^\top y) = \mathbb{E}(\sum_{i=1}^n x_i y_i) = \sum_{i=1}^n \mathbb{E}(x_i y_i) = \sum_{i=1}^n \mathbb{E}(x_i)\mathbb{E}(y_i) = \mathbb{E}(x)^\top\mathbb{E}(y)$. \qed

\begin{prop} \label{prop:Dhairya} 
(From \cite[Lemma 4.2]{patel2025risk}) Let all random objects to be specified be defined on $(\Omega, \mathcal{G}, \mathbb{P})$. Let $\mathcal{F} = \sigma(h)$, where $h$ is a random vector. Let $v$ and $w$ be $\mathcal{F}$-measurable, square-integrable, $\mathbb{R}^m$- and $\mathbb{R}^n$-valued random vectors, respectively. Let $M$ be an integrable, $\mathbb{R}^{m \times n}$-valued random matrix independent of $h$. Then, $v^\top M w$ is integrable and $\mathbb{E}(v^\top M w | \mathcal{F} ) = v^\top \mathbb{E}(M) w$ a.s.
\end{prop}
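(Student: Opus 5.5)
The plan is to reduce the matrix statement to finitely many scalar statements by expanding the bilinear form entrywise,
\begin{equation*}
v^\top M w = \textstyle\sum_{i=1}^{m}\sum_{j=1}^{n} v_i w_j M_{ij},
\end{equation*}
and then handle each scalar term $v_i w_j M_{ij}$ separately. Integrability of the sum and linearity of (conditional) expectation over finitely many integrable terms \cite[Th. 1.6.3]{ash1972probability} then give the result.

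\textbf{Integrability.} Fix $(i,j)$.
\begin{itemize}
\item Since $v$ and $w$ are square-integrable, the Cauchy--Schwarz inequality gives $\mathbb{E}(|v_i w_j|) \le \mathbb{E}(v_i^2)^{1/2}\mathbb{E}(w_j^2)^{1/2} < \infty$, so $v_i w_j$ is integrable.
\item Because $v$ and $w$ are $\mathcal{F}$-measurable with $\mathcal{F}=\sigma(h)$, the scalar $v_i w_j$ is $\sigma(h)$-measurable. Hence $\sigma(v_i w_j)\subseteq\sigma(h)$.
\item Since $M$ is independent of $h$, $M_{ij}$ is independent of $v_i w_j$. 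Both factors are integrable, so Proposition~\ref{prop:expt-vector}(b) (in the scalar case $n=1$) shows that $v_i w_j M_{ij}$ is integrable with $\mathbb{E}(v_iw_jM_{ij}) = \mathbb{E}(v_iw_j)\mathbb{E}(M_{ij})$.
\end{itemize}
Summing finitely many integrable terms shows that $v^\top M w$ is integrable.

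\textbf{Conditional expectation.} For each $(i,j)$, I apply the pull-out property \cite[Th.~6.5.11(a)]{ash1972probability}, which is the scalar version of Proposition~\ref{prop:expt-vector}(a). Its hypotheses hold: $v_iw_j$ is $\mathcal{F}$-measurable, $M_{ij}$ is integrable, and the product $v_iw_jM_{ij}$ is integrable by the previous step. This gives
\begin{equation*}
\mathbb{E}(v_iw_jM_{ij}\mid\mathcal{F}) = v_iw_j\,\mathbb{E}(M_{ij}\mid\mathcal{F}) \quad \text{a.s.}
\end{equation*}
Next, since $M_{ij}$ is independent of $\sigma(h)=\mathcal{F}$, we have $\mathbb{E}(M_{ij}\mid\mathcal{F}) = \mathbb{E}(M_{ij})$ a.s. (standard property of conditional expectation under independence \cite{ash1972probability}). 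Summing over $(i,j)$ with linearity of conditional expectation, and intersecting the finitely many almost-sure events, yields
\begin{equation*}
\mathbb{E}(v^\top M w\mid\mathcal{F}) = \textstyle\sum_{i,j} v_iw_j\mathbb{E}(M_{ij}) = v^\top\mathbb{E}(M)w \quad \text{a.s.}
\end{equation*}

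\textbf{Main obstacle.} The delicate point is justifying the pull-out step when $M_{ij}$ is only integrable rather than square-integrable. The product's integrability therefore cannot come from Cauchy--Schwarz; it must come from independence. This is why I establish integrability of $v_iw_jM_{ij}$ first, via Proposition~\ref{prop:expt-vector}(b). That step relies on the independence of $M_{ij}$ from the $\sigma(h)$-measurable scalar $v_iw_j$, which I must state carefully through $\sigma(v_iw_j)\subseteq\sigma(h)$.
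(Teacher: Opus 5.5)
Your proof is correct. The paper gives no argument of its own for this proposition; it imports the statement from \cite[Lemma 4.2]{patel2025risk}. So your entrywise reduction is a self-contained justification rather than a reconstruction of a proof in the paper.

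Your route uses only the elementary facts the paper already relies on in Proposition~\ref{prop:expt-vector}:
\begin{enumerate}
\item[(i)] a product of independent integrable variables is integrable, with expectation equal to the product of the expectations;
\item[(ii)] the pull-out property for a known factor, which needs only that the other factor and the product are integrable;
\item[(iii)] $\mathbb{E}(M_{ij}\mid\mathcal{F})=\mathbb{E}(M_{ij})$ a.s.\ when $M_{ij}$ is independent of $\mathcal{F}$.
\end{enumerate}

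You order these correctly. You first get $v_iw_j\in\mathcal{L}^1$ by Cauchy--Schwarz. You then deduce independence of $M_{ij}$ and $v_iw_j$ from $\sigma(v_iw_j)\subseteq\sigma(h)$. Only after establishing integrability of $v_iw_jM_{ij}$ through independence do you invoke the pull-out property. You are also right that independence, not a moment bound, is what makes the product integrable here. In the paper's applications (Steps 2a and 2b) the controls are only square-integrable, so $x_{t-i}$ and $\hat{x}_t$ need not have fourth moments, and Cauchy--Schwarz alone would not suffice.

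The citation keeps the appendix short, while your argument makes it self-contained.
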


We now proceed to proving Lemma~\ref{lemma:delta-t-decomposition}.

\paragraph*{Step 1)}

First, consider the case of $\mathbf{k}_t \geq 1$. Define
$ \hat{x}_t \coloneqq \mathbb{E}(x_t|\mathcal{F}_{t-1}) = Ax_{t-1} + Bu_{t-1} + \bar{w} $ a.s.,
which is $\mathcal{F}_{t-1}$-measurable. Since $x_0,\dots,x_N$ are square-integrable random vectors under Assumption~\ref{assu1}, all terms in the expansion~\eqref{eq:zt-expanded} of $z_t$ are integrable random variables. Then, 
%
\begin{equation*}\begin{aligned}
    \mathbb{E}(z_t|\mathcal{F}_{t-1}) \overset{\text{a.s.}}{=} & \textstyle \sum_{i=1}^{\mathbf{k}_t} \sum_{j=1}^{\mathbf{k}_t} x_{t-i}^\top Q_{ij} x_{t-j} \\
    & \textstyle + 2 \sum_{i=1}^{\mathbf{k}_t} x_{t-i}^\top Q_{i0} \hat{x}_t + \mathbb{E}(x_t^\top Q_{00} x_t | \mathcal{F}_{t-1}) 
\end{aligned}\end{equation*}
in particular by linearity of conditional expectation \cite[Th. 6.5.2(a)]{ash1972probability} 
and Proposition~\ref{prop:expt-vector}(a) with $x_{t-i}$ being $\mathcal{F}_{t-1}$-measurable for every $i \in \{1, \ldots, \mathbf{k}_t\}$. 
Using $x_t - \hat{x}_t = d_t$ a.s., we can compute $\Delta_t$ from its definition to find that 
$\Delta_t = 2 \sum_{i=1}^{\mathbf{k}_t} x_{t-i}^\top Q_{i0} d_t + y_t$ \text{a.s.}, 
and squaring this expression for $\Delta_t$ gives \eqref{term123-decomp}. Now, in the case of $\mathbf{k}_t = 0$, we have $z_t = x_t^\top Q_{00} x_t$, leading to $\Delta_t = y_t$.

\paragraph*{Step 2a) Computation of $\mathbb{E}(\Delta_{t,2})$.}

For each $i$ and $j$ in $\{1,\ldots,\mathbf{k}_t\}$, the random vectors $x_{t-i}$ and $x_{t-j}$ are $\mathcal{F}_{t-1}$-measurable and square-integrable, and the random matrix $Q_{i0} d_t d_t^\top Q_{0j}$ is integrable and independent of $h_{t-1}$. Hence, by Proposition~\ref{prop:Dhairya}, $x_{t-i}^\top Q_{i0} d_t d_t^\top Q_{0j} x_{t-j}$ is integrable and $\mathbb{E}(x_{t-i}^\top Q_{i0} d_t d_t^\top Q_{0j} x_{t-j}|\mathcal{F}_{t-1}) = x_{t-i}^\top Q_{i0} \Sigma Q_{0j} x_{t-j}$ a.s. Taking the expectation, applying the law of total expectation, and summing over $i$ and $j$ leads to \eqref{term2-expt}. 

\paragraph*{Step 2b) Computation of $\mathbb{E}(\Delta_{t,3})$.}

It can be shown \cite[Appendix]{tsiamis2020risk} that
\begin{equation} \label{eq:yt} 
   y_t = d_t^\top Q_{00} d_t - \mathrm{tr}(\Sigma Q_{00}) + 2\hat{x}_t^\top Q_{00} d_t \quad \text{a.s.}
\end{equation}
We substitute \eqref{eq:yt} into \eqref{term3} to obtain
    $ \Delta_{t,3} = 4 \sum_{i=1}^{\mathbf{k}_t} \alpha_{1i} + \alpha_{2i} + \alpha_{3i} $ a.s.,
where 
\begin{equation*}\begin{aligned}
    \alpha_{1i} & \coloneqq x_{t-i}^\top Q_{i0} d_t d_t^\top Q_{00} d_t, \\
    \alpha_{2i} & \coloneqq -x_{t-i}^\top Q_{i0} d_t \text{tr}(\Sigma Q_{00}), \quad \text{and}\\
    \alpha_{3i} & \coloneqq 2 x_{t-i}^\top Q_{i0} d_t d_t^\top Q_{00} \hat{x}_t.
\end{aligned}\end{equation*}
Every $\alpha_{1i}$ and $\alpha_{2i}$ are integrable by Proposition \ref{prop:expt-vector}(b), and every $\alpha_{3i}$ is integrable by Proposition \ref{prop:Dhairya}. 
Thus, $\Delta_{t,3}$ is also integrable. Computing their expectations $\mathbb{E}(\alpha_{1i}) = \mathbb{E}(x_{t-i}^\top Q_{i0} \gamma )$, $\mathbb{E}(\alpha_{2i}) = 0$, and $\mathbb{E}(\alpha_{3i}) = 2\mathbb{E}(x_{t-i}^\top Q_{i0} \Sigma Q_{00} x_t)$
leads to \eqref{term3-expt}. More details follow. To compute $\mathbb{E}(\alpha_{1i})$ and $\mathbb{E}(\alpha_{2i})$, one can apply Proposition \ref{prop:expt-vector}(b), particularly using the properties that $x_{t-i}$ is integrable and $\mathcal{F}_{t-1}$-measurable, $w_{t-1}$ and $h_{t-1}$ are independent, $\mathbb{E}(d_t d_t^\top Q_{00} d_t) = \gamma \in \mathbb{R}^n$, and $\mathbb{E}(d_t) = 0_{n\times 1}$. To compute $\mathbb{E}(\alpha_{3i})$, first one can apply Proposition \ref{prop:Dhairya}, $\mathbb{E}(d_t d_t^\top) = \Sigma \in \mathbb{R}^{n \times n}$, and the law of total expectation to derive $\mathbb{E}(\alpha_{3i}) = \mathbb{E}(2 x_{t-i}^\top Q_{i0} \Sigma Q_{00} \hat{x}_t)$. Observe that $x_{t-i}^\top Q_{i0} \Sigma Q_{00} x_t$ is integrable because $x_{t-i}$ and $x_t$ are square-integrable. Hence, it follows from the law of total expectation and Proposition~\ref{prop:expt-vector}(a) that
\begin{align*}
    \mathbb{E}(x_{t-i}^\top Q_{i0} \Sigma Q_{00} x_t) & = \mathbb{E}(\mathbb{E}(x_{t-i}^\top Q_{i0} \Sigma Q_{00} x_t|\mathcal{F}_{t-1})) \\
    & = \mathbb{E}(x_{t-i}^\top Q_{i0} \Sigma Q_{00} \mathbb{E}(x_t|\mathcal{F}_{t-1})) \\
    & = \mathbb{E}(x_{t-i}^\top Q_{i0} \Sigma Q_{00} \hat{x}_t),
\end{align*}
which completes the proof. \qed

\bibliographystyle{plain}        
\bibliography{autosam}           

\setlength{\columnsep}{6pt}%
\setlength{\intextsep}{0pt}
                                        
\end{document}